\definecolor{darkred}{RGB}{100,0,0}
\definecolor{darkgreen}{RGB}{0,100,0}
\definecolor{darkblue}{RGB}{0,0,150}
\definecolor{orange}{RGB}{200,100,0}
\newcommand{\R}{\mathbb{R}}
\newcommand{\E}{\mathbb{E}}
\newcommand{\N}{\mathbb{N}}
\newcommand{\1}{\mathbf{1}}
\newcommand{\Var}{\operatorname{Var}}
\newcommand{\by}{\mathbf{y}}
\newcommand{\bY}{\mathbf{Y}}
\newcommand{\bA}{\mathbf{A}}
\newcommand{\bS}{\mathbf{S}}
\newcommand{\btheta}{\mathbf{\theta}}
\newcommand{\Unif}{\operatorname{Unif}}
\newcommand{\Bin}{\operatorname{Bin}}
\newcommand{\Ber}{\operatorname{Ber}}
\newcommand{\Geom}{\operatorname{Geom}}
\newcommand{\KL}{\operatorname{KL}}
\renewcommand{\tilde}{\widetilde}
\renewcommand{\d}{\ensuremath {\,\mathrm{d}}}
\renewcommand{\vec}[1]{{\boldsymbol{#1}}}
\renewcommand{\P}{\mathbb{P}}
\numberwithin{equation}{section}
\newtheorem{theorem}{Theorem}[section]
\newtheorem{lemma}{Lemma}[section]
\newtheorem{remark}{Remark}[section]
\newtheorem{conjecture}{Conjecture}[section]
\theoremstyle{remark}
\begin{document}
\thispagestyle{empty}

\title{Are there needles in a moving haystack?  Adaptive sensing for detection of dynamically evolving signals}
\author[1]{Rui M. Castro}
\author[2]{Ervin T\'anczos}
\affil[1]{Technische Universiteit Eindhoven} 
\affil[2]{University of Wisconsin - Madison}
\date{}
\maketitle

\begin{abstract}
In this paper we investigate the problem of detecting dynamically evolving signals.  We model the signal as an $n$ dimensional vector that is either zero or has $s$ non-zero components.  At each time step $t\in \mathbb{N}$ the non-zero components change their location independently with probability $p$.  The statistical problem is to decide whether the signal is a zero vector or in fact it has non-zero components.  This decision is based on $m$ noisy observations of individual signal components collected at times $t=1,\ldots,m$.  We consider two different sensing paradigms, namely adaptive and non-adaptive sensing.  For non-adaptive sensing the choice of components to measure has to be decided before the data collection process started, while for adaptive sensing one can adjust the sensing process based on observations collected earlier.  We characterize the difficulty of this detection problem in both sensing paradigms in terms of the aforementioned parameters, with special interest to the speed of change of the active components.  In addition we provide an adaptive sensing algorithm for this problem and contrast its performance to that of non-adaptive detection algorithms.
\end{abstract}

\section{Introduction} \label{sec:intro}

Detection of sparse signals is a problem that has been studied with great attention in the past.  The usual setting of this problem involves a (potentially) very large number of items, of which a (typically) much smaller number \emph{may} be exhibiting anomalous behavior.  A natural question one can ask if it is possible to reliably detect if there are indeed some items showing anomalous behavior?  Questions like this are encountered in a number of research fields.  Some examples include epidemiology where one wishes to quickly detect an outbreak or the environmental risk factors of a disease \citep{Fast_Neill_2003,PermScan_Kulldorff_2005, SpatialScan_Huang_2007, Scan_Kulldorff_2009}, identifying changes between multiple images \citep{Flenner_2011}, and microarray data studies \citep{Pawitan_2005} to name a few.

A common point in the examples above is that even though it is not known which items are anomalous, their identity remains fixed throughout the sampling/measurement process.  However, in certain situations the identity of these items may change over time.  

Consider for instance a signal intelligence setting where one wishes to detect covert communications.  Suppose that our task is to survey a signal spectrum, a small fraction of which may be used for communication, meaning that some frequencies would exhibit increased power.  On one hand we do not know beforehand which frequencies are used, but also the other parties may change the frequencies they communicate through over time.  This means we will be chasing a moving target.  This introduces a further hindrance in our ability to detect whether someone is using the surveyed signal spectrum for covert communications.  

Other motivating examples for such a problem include spectrum scanning in a cognitive radio system \citep{Li_2009,Caromi_2013}, detection of hot spots of a rapidly spreading disease \citep{Shah_2011,Zhu_2013,Luo_2013,Wang_2014}, detection of momentary astronomical events \citep{Thompson_2014} or intrusions into computer systems \citep{Gwadera_2005,Phoha_2007}.  The main question that we aim to answer in this paper is how the dynamical aspects of the signal affect the difficulty of the detection problem.

In the more classical framework of the signal detection problem, inference is based on observations that are collected non-adaptively.  However, dealing with time-dependent signals naturally leads to a setting where measurements can be obtained in a sequential and adaptive manner, using information gleaned in the past to guide subsequent sensing actions.  Furthermore, in certain situations it is impossible to monitor the entire system at once, but instead one can only partially observe the system at any given time.  

It is known that, in certain situations, adaptive sensing procedures can very significantly outperform non-adaptive ones in signal detection tasks \citep{AS_Rui_2012}.  Hence our goal is to understand the differences between adaptive and non-adaptive sensing procedures when used for detecting dynamically evolving signals, in situations where the system can only be partially monitored.

\paragraph{Contributions:}
In this paper we introduce a simple framework for studying the detection problem of time-evolving signals.  Our signal of interest is an $n$-dimensional vector $x_t \in\R^n$, where $t\in\N$ denotes the time index.  We take a hypothesis testing point of view.  Under the null the signal is static and equal to the zero vector for all $t$, while under the alternative the signal is a time-evolving $s$-sparse vector.  At each time step $t\in \N$ we flip a biased coin independently for each non-zero signal component to decide if these will ``move'' to a different location.  Thus, the coin bias $p$ encodes the speed of change of the signal support in some sense.  At each time step we are allowed to select one component of the signal to observe through additive standard normal noise, and we are allowed to collect up to $m$ measurements.  Our goal is to decide whether the signal is zero or not, based on the collected observations.

We present an adaptive sensing algorithm that addresses the above problem, and show it is near-optimal by deriving the fundamental performance limits of any sensing and detection procedure.  We do this in both the adaptive sensing and non-adaptive sensing settings for a range of parameter values $p$ and $s$.  It is easy to see that the above problem can not be solved reliably unless we are allowed to collect on the order of $n/s$ measurements.  When the number of measurements is of this order, we can reliably detect the presence of the signal when the smallest non-zero component scales roughly like $\sqrt{p \log (n/s)}$ in the adaptive sensing setting (Theorems~\ref{thm:upper} and \ref{thm:1sparse_lower}).  In the non-adaptive sensing setting detection is possible only when the smallest non-zero component scales like $\sqrt{\log (n/s)}$ (Theorem~\ref{thm:na_lower}).  Hence, under the adaptive sensing paradigm the speed of change influences the difficulty of the detection problem, with slowly changing signals being easier to detect.  Contrasting this, in the non-adaptive sensing setting the speed of change appears to have no strong effect in the problem difficulty when $m$ is of the order $n/s$.  When the number of measurements $m$ is significantly larger than $n/s$ the picture changes quite a bit, and a theoretical analysis of that case is beyond the contribution of this paper.  Nevertheless we provide some simulation results indicating that, in the non-adaptive sensing setting, the signal dynamics will then influence the detection ability.

Despite its simplicity, the setting introduced in this paper provides a good starting point to understand the problem of detecting dynamically evolving signals.  Although we provide several answers in this setting many questions remain (both technical and conceptual).  We hope that this work opens the door for many interesting and exciting extensions and developments, some of which are highlighted in Section~\ref{sec:end}.  

\paragraph{Related work:} The setting where the identity of the anomalous items is fixed over time has been widely studied in the literature.  Classically this problem has been addressed when each entry of the vector is observed exactly once.  In this context both the fundamental limits of the detection problem and the optimal tests are well understood (see \cite{Minimax_Ingster_2000, Det_Ingster_2002,Baraud_2002,HC_Donoho_2004} and references therein).

The same problem has been investigated in the adaptive sensing setting as well.  In \cite{DS_Haupt_2011} the authors provide an efficient adaptive sensing algorithm for identifying a few anomalous items among a large number of items.  These results were generalized in \cite{malloy:ST} to cope with a wide variety of distributions.  The algorithms outlined in these works can in principle also be used to solve the detection problem, that is where only the presence or absence of anomalous items needs to be decided.  In \cite{Seq_Testing_Limits_Malloy_2011} and \cite{AS_Rui_2012} bounds on the fundamental difficulty of the estimation problem were derived, whereas in \cite{AS_Rui_2012} bounds for the detection problems were provided as well.  

Our work here has a similar flavor to all the above, but tackling the problem when the anomalous items may change positions while the measurement process is taking place.  This brings a new temporal dimension to the signal detection problems referenced above.  Statistical inference problems pertaining time-dependent signals have been investigated in various settings in the past.  However, the papers referenced below only have varying degrees of connection to the problem we are considering, as despite our best efforts, we were only able to find a few instances that resemble our setting.

A setting that has some degree of temporal dependence is the monitoring of multi-channel systems.  This problem was introduced in \cite{Zigangirov_1966} and later revisited in \cite{Klimko_1975} and \cite{Dragalin_1996}.  In this setting each channel of a multi-channel system contains a Wiener process, a few of which are anomalous and have a deterministic drift.  The observer is allowed to monitor one channel at a time with the goal to localize the anomalous channels as quickly as possible.
Although there is a clear temporal aspect to these problems, the anomalous channels identity is unchanged during the process.

Another prototypical example of inference concerning temporal data is change-point detection in a system involving multiple processes.  In this problem we have multiple sensors observing stochastic processes.  After some unknown time a change occurs in the statistical behavior of some of the processes, and our goal is to detect when such a change occurs as quickly as possible.  This setting has been studied in \cite{Hadjiliadis_2008}, a Bayesian version of the problem was investigated in \cite{Raghavan_2010}, while the authors of \cite{Bayraktar_2015} deal with a version of the above problem where only one of the sensors is compromised.

This setting shares similarities to ours, but there are some key differences.  In the change-point detection setting, once a process becomes anomalous it remains so indefinitely.  Since some processes are bound to exhibit anomalous behavior, the goal is to minimize the detection delay.  Contrasting this, in the setting we consider an anomalous process can revert back to the nominal state, and there is a possibility that none of the processes are anomalous at any time.  Hence our goal is to decide between the presence or absence of any anomalous processes over the measurement horizon.

A set of more closely related work is concerned with the spectrum scanning of multichannel cognitive radio systems.  Here the aim is to quickly and accurately determine the availability of each spectrum band of a multi-band system where the occupancy status changes over time.  Alternatively one might only aim to quickly find a single band that is available.  This problem has been studied in \cite{Li_2009} and \cite{Caromi_2013}, in which the authors provide efficient algorithms for the problem at hand.  A very similar problem was investigated in \cite{Zhao_2010_onoff}, where one observes multiple ON/OFF processes and wishes to catch one in the ON state.

Although the underlying models of these problems come very close to the one we consider, these works are also change-point detection problems in spirit.  Hence a similar comment applies here as well, namely that the goal of the algorithms of \cite{Li_2009, Caromi_2013} and \cite{Zhao_2010_onoff} is to detect a change-point while minimizing some notion of regret (such as detection delay or sampling cost), which is somewhat different to the problem we are aiming to tackle.


\paragraph{Organization:} Section~\ref{sec:setup} introduces the problem setup, including the signal and observation models and the inference goals.  In Section~\ref{sec:upper} we introduce an adaptive sensing algorithm and analyze its performance.  Section~\ref{sec:lower} is dedicated to the characterization of the difficulty of the detection of dynamically evolving signals.  In particular we show that the algorithm presented in Section~\ref{sec:upper} is near-optimal, and examine the difference between adaptive and non-adaptive sensing procedures.  In Section~\ref{sec:sim} we present numerical evidence supporting a conjecture on the non-adaptive sensing performance limit in the regime when $m$ is of the order $n/s$.  Concluding remarks and avenues for future research are provided in Section~\ref{sec:end}.



\section{Problem setup} \label{sec:setup}

For notational convenience let $[k]=\{ 1,\dots ,k \}$ where $k\in \N$.  In our setting the underlying (unobserved) signal at time $t$ is a $n$-dimensional vector, where time $t\in\N$ is discrete.  Let $\mu>0$ and denote the unknown signal at time $t\in\N$ by $\vec{x}^{(t)}\equiv\left(x_1^{(t)},\ldots,x_n^{(t)}\right)\in\R^n$, where
\[
x^{(t)}_i = \left\{ \begin{array}{ll}
\mu & \text{ if }i \in S^{(t)}\\
0 & \text{ if } i \notin S^{(t)}
\end{array} \right.  \ ,
\]
and $S^{(t)} \subset [n]$ is the support of the signal at time $t$.  We refer to the components of $\vec{x}^{(t)}$ corresponding to the support $S^{(t)}$ as the \emph{active components} of the signal at time $t$.  In Section~\ref{subsec:signal_model} we model the signal as a random process with the property that, at any time, the number of active components is much smaller than $n$.  

In this idealized model the active components of $\vec{x}^{(t)}$ have all same value, which might seem restrictive at first.  However, when the active components have different signs and magnitudes, the arguments of all the proofs hold throughout the paper with $\mu$ playing the role of the minimum absolute value of the active components.  Although a more refined analysis is likely possible, where the minimum is replaced by a suitable function of the magnitudes of active components, we choose to sacrifice generality for the sake of clarity (see also Remark~\ref{rem:minimum_vs_average} below).

The signal is only observable through $m$ noisy coordinate-wise measurements of the form
\begin{equation} \label{eqn:measurement_model1}
Y_t = x^{(t)}_{A_t} + W_t\ ,\ t\in [m]\ ,
\end{equation}
where $A_t \in [n]$ is the index of the entry of the signal measured at time $t$ and $W_t$ are independent and identically distributed (i.i.d.) standard normal random variables.  In the general adaptive sensing setting $A_t$ is a (possibly random) measurable function of $\{ Y_j ,A_j \}_{j\in [t-1]}$ and $W_t$ is independent of $\{\vec{x}^{(j)},A_j \}_{j\in [t]}$ and $\{Y_j\}_{j\in[t-1]}$.  This means the choice of signal component to be measured can depend on the past observations.  A more restrictive setting is that of non-adaptive sensing, where the choice of components to be measured has to be made before any data is collected.  Formally $A_t$ is independent from $\{Y_j ,A_j \}_{j\in [t-1]}$ for all $t\in[m]$.  

\begin{remark}
This measurement model is very similar to that of \cite{DS_Haupt_2011}, \cite{AS_Rui_2012} and \cite{AS_structured_Rui_ET_2013}, where measurements are of the form
\[
Y_t = x_{A_t} + \Gamma_t^{-1} W_t\ ,\ t=1,2,\dots \ ,
\]
when $x$ is a (time-independent) signal, $A_t$ are as above, and $\Gamma_t \in \R$ represent the precision of the measurements (that can be also chosen adaptively).  

In those papers the authors impose a restriction on the total precision used (and not on the number of measurements).  However, since often the precision is related to the amount of time we have for an observation it is somewhat more appealing to consider fixed precision measurements instead.  See also Remark~\ref{rem:discrete_vs_continuous} for an alternative model closer in spirit to that of the above papers.
\end{remark}

\begin{remark}
Recently \cite{Enikeeva_2015_bump} considered an extension of the classical sparse signal detection problem in which the measurements are heteroscedastic, and derived the asymptotic constants of the detection boundary.  In principle, a model similar in spirit to the one presented in that work could also be considered here as well, by assuming that measurements on active components not only have elevated means, but also variance different to 1.

The ideas of \cite{Enikeeva_2015_bump} can be used to modify our detection procedure (in particular the Sequential Thresholding Test -- see Algorithm~\ref{STT}) to craft a procedure that can deal with measurements of different variances.  However, the question of heteroscedasticity for dynamically evolving signals is too rich to be dealt with in the present work.
\end{remark}


\subsection{Signal dynamics}\label{subsec:signal_model}

We consider what might be the simplest non-trivial stochastic model for the evolution of the signal.  Our goal is to model situations where the signal support $S^{(t)}$ changes ``slowly'' over time.  

For concreteness consider first a particular situation, where we assume that at any time $t$ there is a single active component (so $|S^{(t)}|=1$ for all $t\in\N$).  We model the support evolution as a Markov process: 
the support $S^{(1)}$ is chosen uniformly at random over the set $[n]$ (that is, the active component is equally likely to be any of the $[n]$ components); for $t\geq 1$ we flip a biased coin with heads probability $p\in[0,1]$ independent of all the past, and if the outcome is heads then $S^{(t+1)}$ is chosen uniformly at random over $[n]$, otherwise $S^{(t+1)}=S^{(t)}$.
In words, at each time instant the active component stays in place with probability $1-p$ and ``jumps'' to another location with probability $p$.  Thus when $p=1$ the signal has a new support drawn uniformly at random at each time $t\in\N$, whereas in case $p=0$ the support is chosen randomly at the beginning and stays the same over time.  In general, the parameter $p$ can be interpreted as the speed of change of the support, with larger values corresponding to a faster rate of change.  This basic model of signal dynamics can be easily generalized to multiple active components model as follows.

Let $s\in[n]$ be the sparsity of our signal.  We enforce that $|S^{(t)}|=s$ for $t\in\N$, meaning the signal sparsity does not change over time.  For $t=1$, $S^{(t)}$ is chosen uniformly at random from the set $\left\{S\subseteq [n] : |S|=s\right\}$.  For time $t \geq 1$, we flip $s$ independent biased coins, each corresponding to an active component, to decide which components move and which stay in the same place.  Formally take $p\in[0,1]$ and let $\theta^{(t)}_i \sim \Ber (p)$ be independent for every $i \in [s],\ t\in \N$.  Consider an enumeration of $S^{(t)}$ as $S^{(t)}\equiv \left\{S_i^{(t)}\right\}_{i\in [s]}$.  If $\theta_i^{(t)}=0$ component $S_i^{(t)}$ will also be included in $S^{(t+1)}$, otherwise it will move.  The support set $S^{(t+1)}$ is chosen uniformly at random from the set
\[
\bigg\{ S \subset [n]:\ |S|=s,\ S\cap S^{(t)} = \{ S^{(t)}_i: \theta^{(t)}_i =0 \} \bigg\} \ .
\]
For illustration purposes we provide some simulated results in Figure~\ref{fig:support} ($n$ is chosen quite small for visual clarity only).

\begin{figure}
\centering
\subfigure[]{\includegraphics[scale=0.76]{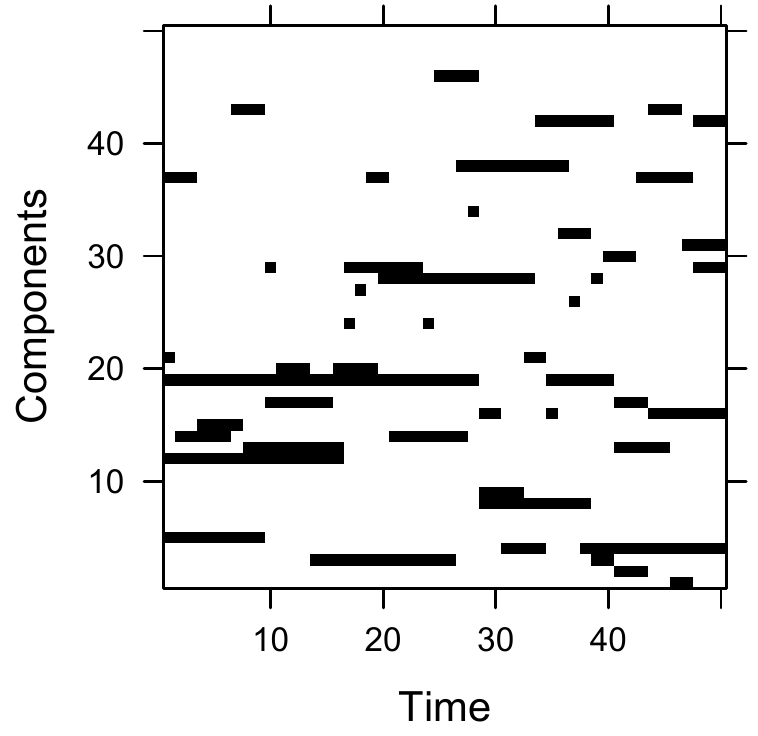}}
\subfigure[]{\includegraphics[scale=0.76]{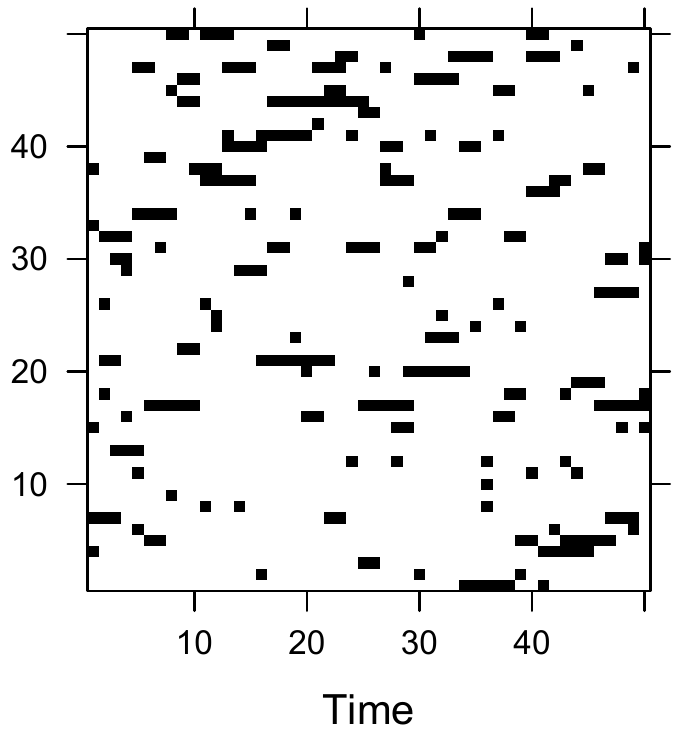}}
\subfigure[]{\includegraphics[scale=0.76]{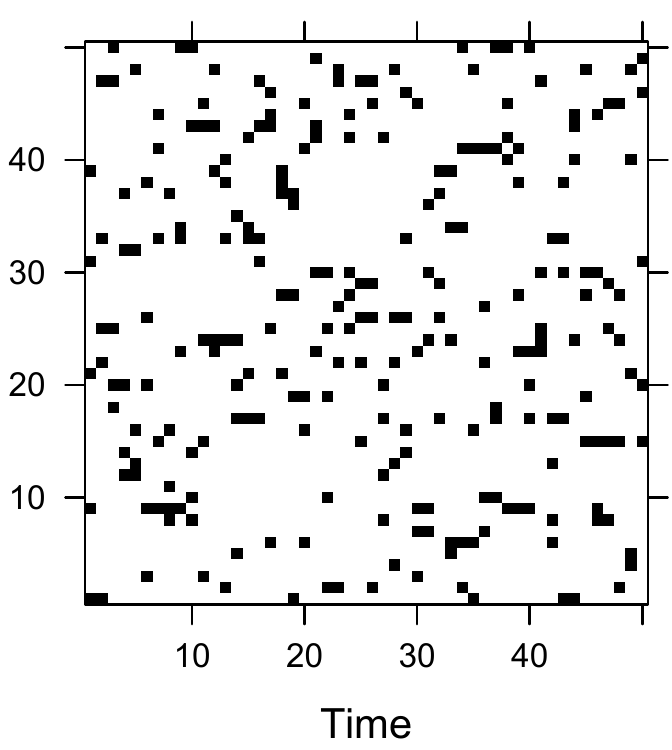}}
\caption{Simulation of the support dynamics with $n=50$, $s=5$ and (a) $p=0.2$; (b) $p=0.5$; and (c) $p=0.8$.  Components in the support are colored black.}
\label{fig:support}
\end{figure}

\begin{remark}\label{rem:discrete_vs_continuous}
Although we consider time to be discrete, continuous-time counterparts of this model are certainly possible (e.g., by taking the transition times to be generated by a Poisson process).  A realistic measurement model in this case would require the variance of the observation noise to be inversely proportional to the time between consecutive measurements, effectively playing a similar role to the precision parameter as in \cite{DS_Haupt_2011, AS_Rui_2012}.
\end{remark}



\subsection{Testing if a signal is present}\label{sec:testing}

In the setting described one can envision several inference goals.  One might try to ``track'' the active components of the signal, attempting to minimize the total number of errors over time.  A somewhat different and in a sense statistically easier goal is to detect the presence of a signal, attempting to answer the question: are there any needles in this moving haystack?  This is the question we pursue in this paper, and it can be naturally formulated as a binary hypothesis test.

Under the null hypothesis there is no signal present, that is $S^{(t)} = \emptyset$ for every $t\in\N$.  Under the alternative hypothesis there is a signal support evolving according to the model described above, for some $s\in[n]$ and $p\in[0,1]$.  Ultimately, after we collected $m$ observations we have to decide whether or not to reject the null hypothesis.  Formally, let $\Psi :\ \{ A_t,Y_t \}_{t\in [m]}\to \{ 0,1 \}$ be a test function where the outcome 1 indicates the null hypothesis should be rejected.  

We evaluate the performance of any test $\Psi\equiv \Psi(\{ A_t,Y_t \}_{t\in [m]})$ in terms of the maximum of the type I and type II error probabilities, which we call the \emph{risk} of a test $R(\Psi)$.  Namely we require
\begin{equation} \label{eqn:test_goal}
R(\Psi)\equiv \max_{i=0,1} \P_i (\Psi \neq i) \leq \varepsilon \ , 
\end{equation}
with some fixed $\varepsilon \in (0,1/2)$, where $\P_0$ and $\P_1$ denote the probability measure of the observations and the null and alternative hypothesis, respectively.  Later on we also use the notation $\E_i$, $i\in\{0,1\}$ to denote the expectation operator under the null and alternative hypothesis respectively.  Note that both the null and alternative hypothesis are simple in the current setup (as we assume $p$ and $\mu$ to be known).  In particular, the density of the observations $\by = (y_1,\dots ,y_m)$ under the alternative can be written as the following mixture:
\[
\d \P_1 (\by ) = \E \left[ \prod_{t\in [m]} g \left( A_t | \{ y_j ,A_j \}_{j\in [t-1]} \right) \left( \1 \{ A_t \in S^{(t)} \} f_\mu (y_t) + \1 \{ A_t \notin S^{(t)} \} f_0 (y_t) \right) \right] \ ,
\]
where $f_\mu$ is the density of a normal distribution with mean $\mu$ and variance 1, $\{ S^{(t)} \}_{t\in [m]}$ are the supports evolving as defined in Section~\ref{sec:setup}, and $g(A_t | \{ y_j ,A_j \}_{j\in [t-1]} )$ is the density of the sensing action at time $t$.  Note, however, that our detection procedures in Section~\ref{sec:upper} do not require knowledge of $\mu$ or $p$.

The main goal of this work is to understand how large the signal strength $\mu$ needs to be, as a function of $n,m,s,p$ and $\varepsilon$ to ensure \eqref{eqn:test_goal} is satisfied.  To this end we first propose a specific adaptive sensing algorithm and evaluate its performance in Section~\ref{sec:upper}.  Furthermore in Section~\ref{sec:lower} we prove that, in several settings, this algorithm is essentially optimal, by showing lower bounds on $\mu$ that are necessary for detection by any sensing and testing strategy.  In the subsequent sections we will see that there is a complex interplay between the parameters $n,m,s$ and $p$ in how they affect the minimum signal strength required for reliable detection.

It is noteworthy to stress that even when we restrict ourselves to the case $p=1$ the nature of the optimal test changes radically depending on the interplay between the remaining parameters $n,m$ and $s$.  In this case, the signal support is reset at every time $t\in\N$, which means that regardless of the sampling strategy (the choice of $A_t$) we are in the situation akin to a so-called sparse mixture model.  These models are now well understood (see \cite{Minimax_Ingster_2000, Det_Ingster_2002}, \cite{Baraud_2002}, \cite{HC_Donoho_2004} and references therein).  We know that in the case of mixture models, for very sparse signals a type of scan test (which is essentially a generalized likelihood-ratio test) performs optimally, whereas for less sparse signals a global test based on the sum of all the observations is optimal.  In our case the interplay between the parameters $n,s$ and $m$ determines the level of sparsity of the sample under the alternative.  This in turn means that when $p=1$ the optimal test and the scaling required for $\mu$, depends on the relation between $m$ and $s/n$.

The above phenomenon becomes even more complex when $p<1$.  Note, however, that unless $m$ is at least of the order of $n/s$ reliable detection is impossible (regardless of the value of $p$).  The reason behind this is that no sampling strategy will sample an active component under the alternative in fewer measurements with sufficiently large probability.  To see this consider the case $p=0$ and suppose there is no observation noise.  Let the sampling strategy be arbitrary and let $\Omega$ denote the event that the algorithm does not sample an active component.  When $m\leq n/s$ we have
\begin{align*}
\P_1 (\Omega ) & \geq \frac{{n-s \choose m}}{{n \choose m}} = \frac{(n-s)(n-s-1)\dots (n-s-m+1)}{n(n-1)\dots (n-m+1)} \\
& \geq \left( 1-\frac{s}{n-m} \right)^m \geq \left( 1-\frac{2s}{n} \right)^{n/s} \ .
\end{align*}
The expression on the right is bounded away from zero when $n/s$ is large enough.  Hence regardless of the sampling strategy, there is a strictly positive probability that no active components are sampled under the alternative, which shows that \eqref{eqn:test_goal} can not hold for $\varepsilon$ smaller than $\left( 1-\frac{2s}{n} \right)^{n/s}$.  When $p> 0$, sampling an active component becomes even harder, hence the same rationale holds.

In this paper we focus primarily on the regime where the number of measurements $m$ is only slightly larger than $n/s$ (what might be deemed to be the ``small sample'' regime).  If we are interested in scenarios where one needs a detection outcome as soon as possible this is the interesting regime to consider.  Interestingly, when $m$ is significantly larger than $n/s$ the optimal sensing and testing strategies, as well as the fundamental difficulty of the problem appears to be quite different than that of the small sample regime, and is an interesting and likely fruitful direction for future work.  In Section~\ref{sec:sim} we conducted a small numerical experiment illustrating how the fundamental performance behavior changes in that regime.

\begin{remark}\label{rem:minimum_vs_average}
The results in this paper can be very naturally generalized for signals with different signs and magnitudes, by considering the class of signals characterized by the minimum signal magnitude.  In the regime where $m$ is of the order of $n/s$ this is essentially the most natural characterization, since only a very small number of active components will actually be observed (so a very low magnitude component will hinder the performance of any method).  When $m$ is significantly larger the picture changes quite significantly and pursuing these results is an interesting avenue for future research beyond the scope of this paper.
\end{remark}


\section{A detection procedure} \label{sec:upper}


In this section we present an adaptive sensing detection algorithm for the setting in Section~\ref{sec:setup} and analyze its performance.  To devise such a procedure we use a similar approach as taken by \cite{AS_structured_Rui_ET_2013} --- first devise a sensible procedure that works when there is no observation noise (i.e., when $W_t\equiv 0$), and then make it robust to noise by using sequential testing ideas.

Consider a setting where there is no measurement noise, that is, when measuring a component of $\vec{x}^{(t)}$ we know for sure whether that component is zero or not.  In such a setting if we find an active component we can immediately stop and deem $\Psi =1$.  Note that it is wasteful to make more than one measurement per component, and that, before hitting an active component, we have absolutely no prior knowledge on the location of active components.  Therefore an optimal adaptive sensing design is random component sampling without replacement.  If we look at a large enough number of randomly chosen components and only observe zeros, it becomes reasonable to conclude that there are no active components and so we deem $\Psi=0$.  Bear in mind though that in case we did not observe any active components we might have simply been unlucky, and missed them even though they are present.  Hence, there is always a possibility for a false negative decision regardless of how many components we observe, unless $p=0$ and $m\geq n-s$.

The procedure that we propose is a ``robustified'' version of the one explained above, so that it can deal with measurement noise.  This is done by performing a simple sequential test to gauge the identity of the component that we are observing.  A natural candidate for this is the Sequential Likelihood-Ratio Test (SLRT), introduced in \cite{SLRT_Wald_1945}.
However, the dynamical nature of the signal causes some difficulties.  In particular the identity/activity of the component that we are observing might change while performing the test, creating many analytic hinderances in the study of the SLRT performance.  We instead use a simplified testing/stopping criteria that is easier to analyze in such a scenario.

The basic detection algorithm, presented in Algorithm~\ref{basic_algorithm}, queries components uniformly at random one after another and tests their identity (whether they are active or not during the subsequent time period) using the sequential test to be described later.  Once a component is deemed to have been active we set $\Psi =1$ and stop collecting data.  If after examining $T$ components or exhausting our measurement budget no components are deemed active we set $\Psi = 0$.  

Formally, let $\{ Q_j \}_{j\in [T]}$ denote the components queried by Algorithm~\ref{basic_algorithm}.  We choose $Q_j,\ j\in [T]$ to be independent $ \Unif ([n])$ random variables.\footnote{In principle one could ensure these are sampled without replacement from $[n]$, but this would only unnecessarily complicate the analysis without yielding significant performance gains.} The appropriate number of queries $T\leq m$ will be chosen later.  For each $Q_j$ we run a sequential test to determine the identity of that component.  We refer to our sequential test as Sequential Thresholding Test (STT).  

To gauge the identity of $Q_j,\ j\in [T]$, the STT algorithm makes multiple measurements at that coordinate.  The exact number of measurements depends on the observed values (in a way we describe in detail later), and hence it is random.  We denote the number of observations collected by STT at coordinate $Q_j$ by $N_j$.  Formally, this means that $A_t = Q_j$ for $t\in \big[ 1+ {\sum_{i=1}^{j-1}} N_i , {\sum_{i=1}^j} N_i \big]$.

At the end of the $j$th run of STT ($j=1,2,\dots ,T$), the STT returns either that an active component was present at coordinate $Q_j$, or that no active component was present at that location.  In the former case there is no need to collect any more samples: Algorithm~\ref{basic_algorithm} stops and declares $\Psi =1$.  Otherwise we continue with applying STT to coordinate $Q_{j+1}$.  If all $T$ runs of STT found no signal, or we exhaust our measurement budget, Algorithm~\ref{basic_algorithm} stops and returns $\Psi =0$.


\begin{algorithm}[h]\label{basic_algorithm}
\caption{Detection Algorithm}
\SetKw{KwParameters}{Parameters:}
\SetKw{KwInitialization}{Initialization:}
\SetKw{KwReturn}{Terminate:}
\KwParameters{}\\
$\quad$\textbullet\ Number of queries $T \in \N$\\
$\quad$\textbullet\ Queries $Q_1,\dots ,Q_T \displaystyle{\mathop{\sim}^{iid}} \Unif ([n])$\\
\For{$j\leftarrow 1$ \KwTo $T$}
	{
	 Perform a STT for the component indexed by $Q_j$\\
	 If the STT returns \textbf{``Signal"}: set $\Psi =1$ and \textbf{break}\\
	 If measurement budget is exhausted: set $\Psi =0$ and \textbf{break}\\
	}	
\end{algorithm}

The sequential test that we use to examine the identity of a queried component is based on the ideas of distilled sensing introduced and analyzed in \cite{DS_Haupt_2011} and the Sequential Thresholding procedure of \cite{malloy:ST}.  The distilled sensing algorithm is designed to recover the support of a sparse signal (whose active components remain the same during the sampling process).  The main idea there is to use the fact that the signal is sparse and try to measure active components as often as possible, while not wasting too many measurements on components that are not part of the support.  Our aim here is somewhat similar: on one hand we wish to quickly identify when the component that we are sampling is non-active so that we can move on to probe a different location of the signal.  On the other hand in case we are sampling an active component we wish to keep sampling it as long as it is active to collect as much evidence as possible.  However, unlike in the original setting of distilled sensing, we need to be able to quickly detect that we are sampling an active component, as it will eventually move away because of the dynamics.  To address the last point the STT algorithm in Algorithm~\ref{STT} uses an evolving threshold for detection depending on the number of observations collected.

We present STT in a way that emphasizes that it is a stand-alone routine plugged into the detection algorithm above, and not necessarily specific to the problem at hand.  Hence, when discussing STT, the observations the STT makes are denoted by $X^{(1)},X^{(2)},\dots\ $.  In the context of Algorithm~\ref{basic_algorithm}, for the $j$th call of STT we have $X^{(1)},X^{(2)},\dots$ to be independent normal random variables with variance one and means respectively $x^{(T_j)}_{Q_j} x^{(T_j +1)}_{Q_j},\dots$, where $T_j=1+\sum_{i=1}^{j-1} N_i$.

\begin{algorithm}[h]\label{STT}
\caption{Sequential Thresholding Test (STT)}
\SetKw{KwParameters}{Parameters:}
\SetKw{KwInitialization}{Initialization:}
\SetKw{KwReturn}{Terminate:}
\KwParameters{}\\
$\quad$\textbullet\ $k \in \N ,\ t_1 > t_2 > \dots > t_k > 0$\\
$\quad$\textbullet\ STT \underline{can} sequentially observe $X^{(1)},X^{(2)},\dots ,X^{(k)}$\\
\For{$j\leftarrow 1$ \KwTo $k$}
	{
	 Observe $X^{(j)}$ and compute $\overline{X}^{(j)} = \sum_{i=1}^j X^{(i)} /j$\\
	 If $\overline{X}^{(j)} \leq t_k$: \textbf{break} and declare \textbf{No signal}\\
	 If $\overline{X}^{(j)} > t_j$: \textbf{break} and declare \textbf{Signal}\\
	}
\end{algorithm}

In words, STT collects at most $k$ measurements sequentially and keeps track of the running average until one of the stopping conditions is met.  The first stopping condition says that once the running average drops below the threshold $t_k$ we stop and declare that there is no signal present.  The second says that if the running average at step $j$ exceeds a threshold $t_j$, we stop and conclude that a signal component is present.  Note that after each measurement the upper threshold decreases, eventually reaching $t_k$, hence the procedure necessarily terminates after at most $k$ measurements.

Key to the performance of the STT is a good choice of $k$ and $\{ t_j \}_{j\in [k]}$, which is informed by the following heuristic argument:  the sample collected by the detection algorithm consists of $T$ blocks of measurements, where each block corresponds to an application of STT.  Let the block lengths be denoted by $\{ N_j \}_{j\in [T]}$.  Suppose for a moment that blocks entirely consist of either zero mean or non-zero mean measurements.  In this case we can simply think of each block $j$ as a single measurement with mean multiplied by $\sqrt{N_j}$ for all $j\in [T]$.  This would reduce the problem to a detection problem in a $T$-dimensional vector, each component being normally distributed and having unit variance.  This is a well-understood setting, and we know that in this case the signal strength needs to scale as $\sqrt{\log T}$ when there are not too many active components (see for instance \cite{HC_Donoho_2004} and the references therein).  Recall that we are concerned with the case where the number of measurements we are allowed to make is of the order $n/s$.  Hence we do not expect to encounter active components too many times.  This heuristic shows that we should calibrate STT in a way that when it encounters $j$ consecutive measurements with elevated mean, it should be able to detect it when $\mu \approx \sqrt{\tfrac{1}{j} \log T}$\footnote{In this informal discussion, the notations $\approx$ and $\gtrsim$ hide constant factors and/or $\log (1/\varepsilon)$ terms.}.  Furthermore, considering the tail properties of the Gaussian distribution, it is easy to see that we also need $\mu \gtrsim \sqrt{\log \tfrac{1}{\varepsilon}}$ for reliable detection.  Recalling that $j\leq k$, this shows that choosing $k$ greater than $\log T$ does not buy us anything.  Informed by the above heuristic argument we choose the parameters of STT so that the following result holds.

\begin{lemma}\label{lem:STT}
Let $\varepsilon \in (0,1)$ and define the parameters of STT as
\begin{align*}
k & = \lfloor \log (T/2) \rfloor \ , \\
t_j & = \sqrt{\frac{c(2\varepsilon /T)}{j} \log \frac{2T}{\varepsilon}} ,\ j\in [k] \ ,
\end{align*}
where
\[
c(x)= 2\left( 1+ \frac{\log \log (1/x)}{\log (1/x)} \right) .
\]
Denote the observations available to the STT by $X^{(1)},\dots ,X^{(k)}$ (note that the STT may terminate without observing all the variables).  Then the following holds:
\begin{enumerate}
\item[(i)]{If $X^{(i)} \displaystyle{\mathop{\sim}^{\text{i.i.d.}}} \mathcal{N}(0,1)$ for $i\in [k]$, then STT declares ``Signal" with probability at most $\varepsilon /T$.}
\item[(ii)]{For any $j\in[k]$, if the $X^{(i)} \displaystyle{\mathop{\sim}^{\text{i.i.d.}}} \mathcal{N}(\mu ,1)$ for $i\in [j]$ with
\[
\mu \geq \sqrt{\frac{c(2\varepsilon /T )}{j} \log \frac{2T}{\varepsilon}} + \sqrt{2 \log \frac{4}{\varepsilon}} \ ,
\]
then STT declares ``No Signal" with probability at most $\varepsilon/3$.}
\end{enumerate}
\end{lemma}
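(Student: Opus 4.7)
The plan is to prove both parts by combining Gaussian tail bounds on the running averages $\overline{X}^{(j)}$, which are $\mathcal{N}(0,1/j)$ under the null and $\mathcal{N}(\mu,1/j)$ within the first $j$ steps under the alternative, with a union bound over the at most $k$ candidate boundary-crossing steps. The thresholds $t_j$ and the precise form of $c(\cdot)$ have been engineered to make these bounds tight, and this calibration is what I expect to be the main technical step.

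For part (i), the event that STT declares ``Signal'' is contained in $\bigcup_{j \in [k]} \{\overline{X}^{(j)} > t_j\}$, since such a declaration requires an upper-threshold crossing. Under the null $\sqrt{j}\,\overline{X}^{(j)} \sim \mathcal{N}(0,1)$ marginally, so the Gaussian tail bound $\P(Z > x) \leq e^{-x^2/2}$ gives
$$\P\!\left(\overline{X}^{(j)} > t_j\right) \leq \exp\!\left(-\tfrac{1}{2} c(2\varepsilon/T) \log(2T/\varepsilon)\right) = \left(\tfrac{\varepsilon}{2T}\right)^{c(2\varepsilon/T)/2}.$$
Writing $u := \log(T/(2\varepsilon))$ so that $c(2\varepsilon/T)/2 = 1 + (\log u)/u$, a short manipulation using $\log(2T/\varepsilon) = u + \log 4$ yields $(\varepsilon/(2T))^{c(2\varepsilon/T)/2} \leq \varepsilon/(2Tu)$. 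Summing over $j = 1,\ldots,k$ and using $k = \lfloor \log(T/2) \rfloor \leq u$ (which holds for $\varepsilon \in (0,1)$), the $\log T$ factor from the union bound is absorbed by the $1/u$ factor, and the total mass is at most $\varepsilon/(2T) \leq \varepsilon/T$. Getting the exponent $1 + (\log u)/u$ just right is the reason behind the somewhat baroque definition of $c$.

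For part (ii), I use the event inclusion
$$\{\text{STT declares ``No Signal''}\} \subseteq \{\text{STT does not declare ``Signal'' by step } j\} \subseteq \{\exists\, i < j : \overline{X}^{(i)} \leq t_k\} \cup \{\overline{X}^{(j)} \leq t_j\},$$
which holds because failing to declare ``Signal'' by step $j$ either requires an earlier lower crossing (terminating as ``No Signal'' at some step $i < j$ with $\overline{X}^{(i)} \leq t_k$) or means STT reaches step $j$ still running, in which case $\overline{X}^{(j)} \leq t_j$. By hypothesis $\mu - t_j \geq \sqrt{2\log(4/\varepsilon)}$, and since $t_k \leq t_j$ also $\mu - t_k \geq \sqrt{2\log(4/\varepsilon)}$; thus under the alternative the Gaussian tail bound gives $\P(\overline{X}^{(i)} \leq t_k) \leq (\varepsilon/4)^i$ and $\P(\overline{X}^{(j)} \leq t_j) \leq (\varepsilon/4)^j$. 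A union bound then yields
$$\P\bigl(\text{STT declares ``No Signal''}\bigr) \leq \sum_{i=1}^{j-1}(\varepsilon/4)^i + (\varepsilon/4)^j = \sum_{i=1}^{j}(\varepsilon/4)^i \leq \frac{\varepsilon/4}{1-\varepsilon/4} \leq \frac{\varepsilon}{3},$$
using $\varepsilon \in (0,1)$ in the final inequality. Once the above event inclusion is identified, part (ii) reduces to routine Gaussian tail estimates; the real obstacle for the lemma as a whole is the careful exponent calibration in part (i).
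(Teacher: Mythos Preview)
Your proposal is correct and follows essentially the same approach as the paper's proof: both parts use a union bound over the possible boundary-crossing steps together with the standard Gaussian tail bound on the running averages, and the event decomposition you use in part~(ii) is exactly the paper's. The only cosmetic differences are that the paper keeps the extra factor $\tfrac{1}{2}$ in the Mills-ratio bound and, in part~(ii), bounds the first $j-1$ terms and the $j$th term separately rather than merging them into a single geometric series as you do; neither change affects the argument.
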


Note that, for (ii) it suffices for the first $j$ observations to have elevated mean to guarantee the good performance of the STT.

\begin{proof}[Proof of Lemma~\ref{lem:STT}]
For the first part suppose note that the STT declares ``Signal" if at any time step $j\in [k]$ the running average $\overline{X}_j$ exceeds the threshold $t_j$.
\begin{align*}
\P \left( \exists j \in [k]: \overline{X}^{(j)} \geq t_j \right) & \leq \sum_{j=1}^{k} \P (\overline{X}^{(j)} \geq t_j) \\
& \leq \sum_{j=1}^{k} \frac{1}{2} \exp \left( - \frac{j t_j^2}{2} \right) \\
& = \sum_{j=1}^{\lfloor \log (T/2) \rfloor} \frac{1}{2} \exp \left( -\frac{c(2\varepsilon /T)}{2} \log \frac{T}{2\varepsilon} \right) \\
& \leq \frac{1}{2}\log (T/2) \cdot \left( \frac{2\varepsilon}{T} \right)^{c(2\varepsilon /T)/2} \ ,
\end{align*}
where the first inequality follows by a union bound, and the second inequality is follows by a tail bound on Gaussian random variables noting that $\overline X_j\sim\mathcal{N}(0,1/j)$.  The last expression above is at most $\varepsilon /T$, which can be checked by taking the logarithm:
\begin{align*}
\log \left( \frac{1}{2} \log (T/2) \cdot \left( \frac{2\varepsilon}{T} \right)^{c(2\varepsilon /T)/2} \right) &= \log \log (T/2) + \left( 1- \frac{\log \log (T/(2\varepsilon))}{\log (2\varepsilon/T)} \right) \log (2\varepsilon /T) -\log 2\\
& = \log \log (T/2) + \log (2\varepsilon/T) - \log \log (T/(2\varepsilon)) -\log 2\\
& \leq \log \frac{\varepsilon}{T} \ .
\end{align*}

For the second part assume the conditions in (ii) hold for $\mu$ as given in the lemma.  Define the event
\[
\Omega = \left\{ \exists i\in [j-1]: \overline{X}^{(i)} \leq t_k \right\} \ .
\]
Note that if this event happens, we stop and declare ``No signal" in one of the first $j-1$ steps.
\begin{align*}
\P (\textrm{Declare ``No signal"}) & = \P (\Omega ) + \P (\textrm{Declare ``No signal"})|\overline \Omega )\P ( \overline{\Omega} ) \\
& \leq \P (\Omega ) + \P ( \overline{X}^{(j)} \leq t_j|\overline\Omega )\P ( \overline{\Omega} ) \\
& \leq \P (\Omega ) + \P(\overline{X}^{(j)} \leq t_j ) \ .
\end{align*}
Using a union bound and the same Gaussian tail bound as before, the last expression can be upper bounded by
\begin{equation}\label{eqn:miss}
\sum_{i=1}^{j-1} \frac{1}{2} \exp \left( -\frac{i (\mu -t_k)^2}{2} \right) + \frac{1}{2} \exp \left( - \frac{j(\mu -t_j)^2}{2} \right) \ .
\end{equation}

Considering the first term above, note that
\[
\mu - t_k \geq t_j + \sqrt{2 \log \frac{4}{\varepsilon}} -t_k \geq \sqrt{2 \log \frac{4}{\varepsilon}} \ ,
\]
since $t_j \geq t_k$ (recall that $j\leq k$).  Hence the first term can be upper bounded as
\[
\sum_{i=1}^{j-1} \frac{1}{2} \exp \left( -\frac{i (\mu -t_k)^2}{2} \right) \leq \frac{1}{2} \sum_{i=1}^{j-1} (\varepsilon /4)^i \leq \frac{\varepsilon}{2} \frac{1}{4-\varepsilon} \leq \varepsilon /6 \ .
\]
On the other hand, when $\mu$ satisfies the inequality above, the second term is simply upper bounded by $(\varepsilon /4)^j$, and so the left-hand-side of \eqref{eqn:miss} is less than $\varepsilon/6+\varepsilon/8<\varepsilon/3$.
\end{proof}

Using Lemma~\ref{lem:STT}, we can establish a performance guarantee for our detection algorithm.  Though it is possible to derive a result for fixed $n$ and $s$ it is more transparent to state a result for large $n$ instead, better highlighting the impact of parameter $p$.  Keeping this comment in mind, note that $2 \leq c(x) \leq 2(1+1/e)\leq 2\sqrt{2}$ and $c(x)\to 2$ as $x \to 0$.  Thus, keeping $\varepsilon$ fixed and letting $T\to \infty$, we see that if there exists a $\tau >1$ for which
\[
\mu \geq \tau \sqrt{\frac{2}{j} \log T} + \sqrt{2 \log \frac{4}{\varepsilon}} \ ,
\]
then for $T$ large enough the condition on $\mu$ in Lemma~\ref{lem:STT} is satisfied.  Furthermore, recall that our main interest is how the algorithm performs when the time horizon (number of measurements) is only slightly larger than $n/s$.





\begin{theorem}\label{thm:upper}
Fix $\varepsilon \in (0,1/3)$ and assume $s\equiv s_n=o(n/(\log n)^2)$ as $n\to \infty$.  The parameter $p\equiv p_n$ is also allowed to depend on $n$.  Set $T= \tfrac{9n}{2s} \log_2 \tfrac{3}{\varepsilon}$ and the parameters of STT according to Lemma~\ref{lem:STT}.  If the measurement budget is $m\geq 2T$ the detection algorithm satisfies
\[
R(\Psi)=\max_{i=0,1} \P_i (\Psi \neq i) \leq \varepsilon \ ,
\]
whenever
\[
\mu \geq \tau \sqrt{2 \max \{ 2p,\tfrac{1}{\log (n/s)} \} \log (n/s)} + \sqrt{2 \log \frac{4}{\varepsilon}} \ ,
\]
for $n$ large enough and $\tau>1$ fixed (but arbitrary).
\end{theorem}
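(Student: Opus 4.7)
The plan is to bound the Type I and Type II error probabilities separately. Under $H_0$ all observations are i.i.d.\ $\mathcal{N}(0,1)$, so Lemma~\ref{lem:STT}(i) together with a union bound over the (at most) $T$ STT instances immediately yields $\P_0(\Psi = 1) \leq T\cdot \varepsilon/T = \varepsilon$; budget exhaustion under $H_0$ only forces $\Psi = 0$ and hence only helps. The real work lies in bounding $\P_1(\Psi = 0)\leq\varepsilon$ under $H_1$, and the guiding idea is to isolate a tractable ``good hit'' event for which one can obtain a product-structured lower bound on the per-query success probability while still invoking Lemma~\ref{lem:STT}(ii).

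For the Type II analysis I first calibrate a target activity window $j^\star = \min\!\bigl(\lceil \gamma_T/(2p\tau^2)\rceil,\, k\bigr)$, where $\gamma_T = 1+o(1)$ absorbs the finite-sample corrections coming from $c(2\varepsilon/T)=2(1+o(1))$ and $\log(2T/\varepsilon)=(1+o(1))\log(n/s)$ (the latter using $T = \Theta(n/s)$). A direct algebraic comparison shows that this $j^\star$ is designed so that the theorem's hypothesis on $\mu$ implies the condition of Lemma~\ref{lem:STT}(ii) with $j = j^\star$ for $n$ large enough, in both regimes $2p\geq 1/\log(n/s)$ (where $j^\star \asymp 1/(2p\tau^2)$) and $2p < 1/\log(n/s)$ (where $j^\star = k$). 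The factor $\tau > 1$ is precisely what absorbs the $o(1)$ slack.

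Next, call query $j$ a \emph{good hit} if $Q_j \in S^{(T_j + i - 1)}$ for every $i\in [j^\star]$, i.e.\ the queried coordinate carries the same active component throughout the first $j^\star$ observations of the $j$-th STT. Because $Q_j$ is independent of the past and uniform on $[n]$, and $|S^{(T_j)}|=s$ is deterministic, $\P(Q_j \in S^{(T_j)} \mid \mathcal{F}_{j-1}) = s/n$, and the Markov support dynamics give
\[
\P\bigl(\text{good hit at } j \mid \mathcal{F}_{j-1}\bigr) = (s/n)(1-p)^{j^\star - 1} \geq (s/n)\,\beta,
\]
for a constant $\beta = \beta(\tau) > 0$ independent of $\varepsilon$, the lower bound coming from the definition of $j^\star$. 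Conditional on a good hit the first $j^\star$ observations of $\text{STT}_j$ are i.i.d.\ $\mathcal{N}(\mu, 1)$, so Lemma~\ref{lem:STT}(ii) gives $\P(\text{STT}_j\text{ declares Signal}\mid \mathcal{F}_{j-1}) \geq (s/n)\beta(1-\varepsilon/3)$, and iterating via the tower property yields
\[
\P_1\bigl(\text{no STT declares Signal in } T \text{ runs}\bigr) \leq \bigl(1-(s/n)\beta(1-\varepsilon/3)\bigr)^T \leq \exp\!\Bigl(-\tfrac{9\beta(1-\varepsilon/3)}{2}\log_2(3/\varepsilon)\Bigr),
\]
which is below $\varepsilon/2$ by the definition $T = \tfrac{9n}{2s}\log_2(3/\varepsilon)$.

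Finally, to justify that $m \geq 2T$ is a sufficient budget with high probability, the Gaussian tail bound $\P_0(\overline{X}^{(i)} > t_k) \leq \exp(-it_k^2/2)$ combined with $t_k^2 \to 2$ as $n\to\infty$ gives $\E_0[N_j] \leq 1+(e-1)^{-1} < 2$; the expected number of hits under $H_1$ is $Ts/n = (9/2)\log_2(3/\varepsilon) = O(1)$ in $n$, each contributing at most $k = O(\log(n/s))$ measurements, a contribution that is $o(T)$ thanks to the sparsity hypothesis $s = o(n/(\log n)^2)$. A Hoeffding-type concentration on the bounded summands $N_j \leq k$ then gives $\sum_{j=1}^T N_j \leq 2T$ with probability at least $1-\varepsilon/2$, so all $T$ STTs complete within budget. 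Combining with the preceding paragraph yields $\P_1(\Psi = 0) \leq \varepsilon$. The principal obstacle throughout is the signal dynamics: the random starting times $T_j$ depend on past data and the activity of $Q_j$ itself evolves within a single STT run, so one must isolate an event (the good hit) that is simultaneously product-structured across $j$ and strong enough to trigger Lemma~\ref{lem:STT}(ii); the balance $j^\star \asymp 1/p$ is exactly what produces the theorem's $\sqrt{p\log(n/s)}$ scaling.
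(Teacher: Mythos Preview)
Your proposal is correct and follows essentially the same route as the paper: Type~I via Lemma~\ref{lem:STT}(i) and a union bound, Type~II split into a budget-exhaustion term (handled by bounding $\E_1[N_j]<3/2$ and applying Hoeffding with the bounded range $N_j\le k$, which is exactly where the hypothesis $s=o(n/(\log n)^2)$ is used) and a missed-signal term controlled through Lemma~\ref{lem:STT}(ii) with an activity window of order $1/p$. The only organizational difference is that the paper separates the ``hit'' event $Q_j\in S^{(T_j)}$ (counted as $N\sim\Bin(T,s/n)$ and lower-bounded via Bernstein) from the ``long enough duration'' event (handled via the median of a $\Geom(p)$ variable), whereas you fold both into a single per-query good-hit probability $(s/n)(1-p)^{j^\star-1}$ and iterate through the tower property; this is a cosmetic repackaging of the same computation.
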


Before we move on to the proof of this result, let us discuss its message.  First note that the detection algorithm is agnostic about the speed of change $p$ and the signal strength $\mu$, though it does require knowledge of the sparsity $s$ to set the parameter $T$.  

The number of measurements that we require is a multiple of $n/s$, which is the minimum amount necessary to be able to solve the problem (see Section~\ref{sec:testing}).  Furthermore, when $p < 1/(2\log (n/s))$ the signal strength needs to scale as $\sqrt{\log (1/\varepsilon )}$, and when $p\geq 2/\log (n/s)$ it needs to scale as $\sqrt{p \log (n/s)}$.  This matches the intuition that the speed of change $p$ affects the problem difficulty in a monotonic fashion.  We will show in Section~\ref{sec:lower} that in the regime $m \approx n/s$ this scaling of $\mu$ is necessary to reliably solve this detection problem.

In Figure~\ref{fig:STT_detection} we present an illustration of the above detection algorithm.  We can clearly see the ``random'' exploration (in red) and the ``tracking'' of active components (in green).  Note that in this case the algorithm missed that an active component was hit at time 8, so more exploration was needed.

\begin{figure}[t]
\centering
\includegraphics[scale=0.9]{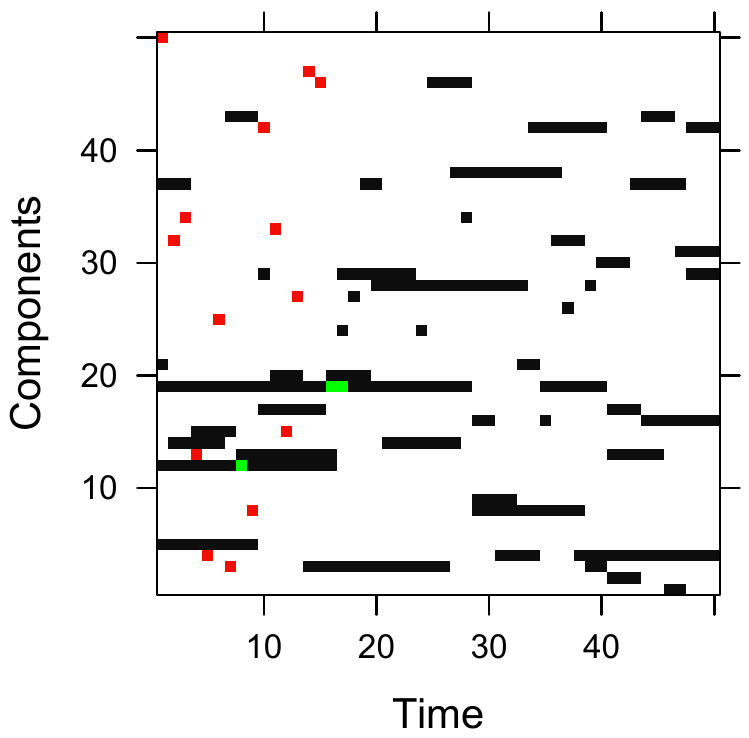}
\caption{Simulation of support dynamics and detection algorithm with $n=50$, $s=5$, and $p=0.2$ (corresponding to the same signal realization as in Figure~\ref{fig:support}).  The algorithm was ran as prescribed by the Theorem~\ref{thm:upper} with $\epsilon=0.05$ and the signal strength $\mu$ is given by the expression in the same theorem with $\tau=1$.  The active signal components are in black.  In red and green are sampled non-active and active signal components, respectively.  The detection algorithm deems that a signal is present after 18 measurements.}
\label{fig:STT_detection}
\end{figure}

\begin{remark}\label{rem:m_regime}
As we have mentioned in Section~\ref{sec:testing}, for now we are interested in the case where the number of observations we can make is roughly $n/s$.  Note that Theorem~\ref{thm:upper} claims the same performance guarantee for every $m$ that is at least of order $n/s$.  

In fact, it is not hard to see that the performance of this algorithm does not improve as $m$ increases, hinting that it is suboptimal for large $m$.  Actually this algorithm completely ignores the fact that a component might have multiple periods of activity over time, and that activity evidence from multiple components might be combined for detection, in a more global fashion.  

Consider the following simple algorithm: sample components uniformly at random in each step $t\in [m]$.  Then in each step we hit an active component with probability $s/n$.  We then roughly have $ms/n$ active components in our sample under the alternative.  Consider the standardized sum of our observations.  Under the null this follows a standard normal distribution, whereas under the alternative it is distributed as $N(\sqrt{m} s \mu/n,1)$.  

Thus reliable detection using this simple global algorithm is possible when $\mu$ is of the order $n/(\sqrt{m} s)$.  Hence this algorithm clearly outperforms the one above when $m$ is large enough (compared to $n/s$).  This phenomena is not unlike that present in sparse mixture detection problems (e.g.  as in \cite{Minimax_Ingster_2000}) where depending on the sparsity a global test might be optimal.
\end{remark}

\begin{proof}[Proof of Theorem~\ref{thm:upper}]
In light of Lemma~\ref{lem:STT}, the type I error probability is at most $\varepsilon$ by a union bound.  Hence we are left with studying the alternative.

There are two ways that our algorithm can make a type II error.  Either the measurement budget is exhausted, or we fail to identify an active component in $T$ runs of STT.  We bound the probability of the first event by $\varepsilon/3$, and of the second event by $2\varepsilon/3$ ensuring that under the alternative the probability of error is bounded by $\varepsilon$.

We start with upper bounding the probability of exhausting our measurement budget.  Let $N_j$ denote the number of measurements that STT makes when called for the $j$th time, for $j\in [T]$.  Note that these random variables are independent and identically distributed, because the components to query are selected uniformly at random independently from the past, the dynamic evolution of the model is memoryless, and the observation noise is independent.  First we upper bound $\E_1 (N_1)$.  Note that $1 \leq N_1 \leq k$, where $k=\lfloor \log (T/2) \rfloor$ by Lemma~\ref{lem:STT}.  Let $\Omega$ denote the event that a non-zero mean observation appears at location $A_1$ in any of the first $k$ steps.  By the law of total expectation we have
\[
\E_1 (N_1) \leq k \P_1 (\Omega ) + \E_1 (N_1 |\overline{\Omega} ) \ .
\]
Note that
\begin{align*}
\P_1 (\Omega ) & = \P_1 (\exists t\in[k]: A_1 \in S^{(t)} ) \leq \sum_{t=1}^k \P_1 (A_1 \in S^{(t)} ) \\
& \leq \frac{s}{n} + (k-1)\frac{s}{n-s} \leq \frac{ks}{n-s} \ ,
\end{align*}
since the choice of $A_1$ (and $S^{(1)}$) is random, and in each subsequent step the probability that a signal component moves to location $A_1$ is at most $s/(n-s)$ regardless of $p$.  On the other hand, recalling that $t_k = \sqrt{\tfrac{c(2\varepsilon /T)}{k} \log \tfrac{T}{2\varepsilon}} \geq \sqrt{2}$ is the lower stopping boundary of STT,
\begin{align*}
\E_1 (N_1|\overline{\Omega}) & = 1+\sum_{t=2}^k \P_0 (N_1 \geq t)\\
& \leq 1+\sum_{t=2}^k \P_0 (\overline{X}_{t-1} > t_k) \leq 1+\sum_{t=2}^k \P_0 (\overline{X}_{t-1} > \sqrt{2}) \\
& \leq 1+ \frac{1}{2} \sum_{t=1}^{k-1} e^{-t} \leq 1+ \frac{1}{2(e-1)} < 3/2 \ .
\end{align*}
Hence
\[
\E_1 (N_1) \leq 1+ \frac{1}{2(e-1)} + \frac{k^2s}{n-s} < 3/2 \ ,
\]
for large enough $n$, since the last term can be made arbitrarily small by the definition of $T$, and the assumption on $s$.  Since $N_1$ is also a bounded random variable, an easy (but crude) way of proceeding is to use Hoeffding's inequality to get
\begin{align*}
\P_1 \left( \sum_{j=1}^T N_j > m \right) & = \P_1 \left( \sum_{j=1}^T N_j - \E_1 \Big( \sum_{j=1}^T N_j \Big) > m - \E_1 \Big( \sum_{j=1}^T N_j \Big) \right) \\
& \leq \P_1 \left( \sum_{i=1}^T N_i - \E_1 \Big( \sum_{i=1}^T N_i \Big) > T/2 \right) \\
& \leq \exp \left( - \frac{T}{2k^2} \right)=\exp \left( - \frac{T}{2\lfloor\log(T/2)\rfloor^2} \right)\leq \varepsilon/3 \ ,
\end{align*}
provided $T$ is large enough, which is the case if $n$ is large enough.  This shows that the probability that the measurement budget is exhausted is bounded by $\varepsilon/3$.

The final step in the proof is to guarantee that the algorithm identifies an active component in one of the $T$ tests with high probability.  To show this, we first guarantee that there will be an instance in the repeated application of STT where the first $1/(2p)$ observations that the procedure has access to have elevated mean (when $p=0$ we only need that the STT probes an active component at least once).  Then we can apply Lemma~\ref{lem:STT} together with a union bound to conclude the proof.

Let $T_j = 1+\sum_{i=1}^{j-1} N_i$ denote the time when STT starts for the $j$th time.  Let $N = \sum_{j=1}^T \1 \{ Q_j \in S^{(T_j)} \}$ denote the number of times an active component is sampled at the start of an STT.  Note that $N \sim \Bin (T,s/n)$.  In these situations the STT has access to a sequence of active measurements (of random length).  Denote the number of consecutive active observations these STTs have access to by $\{ \eta_i \}_{i\in [N]}$, and for now assume $p>0$.  Note that $\eta_i \sim \Geom (p)$ and $\{ \eta_i \}_{i\in [N]}$ are independent.  We have
\begin{align*}
\P (\forall i \in [N]: \eta_i < 1/(2p) ) & \leq \P \left( \forall i\in [N]: \eta_i <1/(2p)\ | N\geq \log_2 \tfrac{3}{\varepsilon} \right) \\
& \qquad + \P \left( N < \log_2 \tfrac{3}{\varepsilon} \right) \ .
\end{align*}
On one hand, note that the median of $\eta_i$ is $\lceil -1/\log_2 (1-p) \rceil$ which is greater than $1/(2p)$.  This can be easily checked by considering the cases $p\geq 1/2$ and $p<1/2$ separately.  Hence the first term above can be upper bounded as
\[
\P \left( \forall i\in [N]: \eta_i <\lceil -1/\log_2 (1-p) \rceil\ | N\geq \log \tfrac{3}{\varepsilon} \right) \leq 2^{-\log_2 \tfrac{3}{\varepsilon}} = \varepsilon /3 \ .
\]

On the other hand, $N \sim \Bin (T,s/n)$ and so by Bernstein's inequality,
\[
\P \left( N < (1-\delta ) \frac{Ts}{n} \right) \leq \exp \left( -\frac{3 \delta^2}{8} \frac{Ts}{n} \right) \ ,
\]
for any $\delta \in (0,1)$.  However, note that plugging in the value of $T$ together with $\delta = 2/3$ yields
\[
\P \left( N<\log_2 \tfrac{3}{\varepsilon} \right) = \P\left( N < (1-\delta)\frac{Ts}{n}\right)\leq \exp \left( -\frac{49}{48} \log_2 \tfrac{3}{\varepsilon} \right) < \varepsilon /3 \ ,
\]
since $\log_2 x > \log x$ for $x>1$.  So we conclude that the probability that there is no block (out of $T$) with the first $1/(2p)$ observations active is bounded by $2\varepsilon/3$.  When $p=0$, we only need to control $\P (N=0)$, for which we can simply use the inequality above since $\log_2 \tfrac{3}{\varepsilon} >0$.  

Finally, if such a block is present the probability STT will not detect it is bounded by $\varepsilon/3$ via part (ii) of Lemma~\ref{lem:STT}, provided
\[
\mu\geq \sqrt{\frac{c(2\varepsilon/T)}{\min \{ 1/(2p),\lfloor \log (T/2) \rfloor \} }\log\left(\frac{T}{2\varepsilon}\right)}+\sqrt{2\log\frac{4}{\varepsilon}}\ ,
\]
where one should note that the blocks sampled by the STT are never larger than $\lfloor \log (T/2) \rfloor$.  It is easily checked that the above condition is met for the choices in the theorem, provided $n$ is large enough, concluding the proof.
\end{proof}


\section{Lower bounds} \label{sec:lower}

In this section we identify conditions for the signal strength that are necessary for the existence of a sensing procedure to have small risk, namely
\begin{equation}\label{eqn:error}
R(\Psi)=\max_{i=0,1} \P_i (\Psi \neq i) \leq \varepsilon \ .
\end{equation}
We consider first the non-adaptive sensing setting.  This is done both for comparison purposes (to highlight the gains of sensing adaptivity) but also illustrates some of the interesting features of this problem.  In this case the sensing procedure is simply the choice of when and where to measure a component, before any data is collected.  Then we consider the adaptive sensing setting to show the near-optimality of the algorithm proposed in Section~\ref{sec:upper}.  In both cases our primary interests in on the regime $m \approx n/s$, as highlighted in Section~\ref{sec:testing}.


\subsection{Non-adaptive sensing} \label{sec:lower_na}

In the non-adaptive sensing setting, the sampling strategy $\{ A_t \}_{t\in [m]}$ needs to be specified before any observations are made.  Note that this does not exclude the possibility of having a random design of the sensing actions.

Common sense tells us that supports that are changing fast are harder to detect than those that are changing slowly, provided all other parameters are fixed.  In other words, the problem difficulty should be increasing in the parameter $p$, meaning the signal magnitude $\mu$ needed to ensure \eqref{eqn:error} should grow monotonically in $p$.  Formalizing this heuristic in general turns out to be technically challenging with the methodologies we are aware of.  Because of this we focus on the two extreme cases: when the signal is static ($p=0$), and when the entire signal resets at each time instance ($p=1$).

\begin{remark}
Note that in the case $s=1$ it is relatively easy to formalize that the problem difficulty is non-decreasing in $p$.

Suppose there exists an algorithm (denoted by \textbf{Alg}) that performs accurate detection for some $p>0$, and suppose we need to perform the detection task of a static signal.  The idea is to transform the signal into one that has the same distribution as if it were generated according to the model of Section~\ref{subsec:signal_model} with parameter $p$, and apply \textbf{Alg} to the modified signal.  If such a transformation is possible than the existence of \textbf{Alg} implies the existence of an accurate detection procedure -- in other words, the problem difficulty is non-decreasing in $p$.

Such a transformation is easy to construct for $s=1$, in fact one can almost follow the description of the signal model of Section~\ref{subsec:signal_model} word-by-word.  Let $\{ \theta_t \}_{t\in [m-1]}$ be i.i.d.  $\Ber (p)$ variables and w.l.o.g.  $\theta_m =1$ --- these represent the coin flips in the description of Section~\ref{subsec:signal_model}.  Let $N=\sum_{t\in [m]} \1 \{ \theta_t =1 \}$ be the number of times the coin came up heads and $\tau_0 =0$ and $\tau_j = \inf \{ t> \tau_{j-1}:\ \theta_t =1 \},\ j\in [N]$ be the instances when the coin came up heads.  Finally, let $\{ \pi_i \}_{i\in [N]}$ be permutations of $[n]$ drawn independently and uniformly at random (from the set of possible permutations).

It is clear that a static support that is permuted by $\pi_i$ on the time intervals $[\tau_{i-1}+1,\tau_i ]$ will "look" like a support sequence evolving with parameter $p$.  Formally, one can show that if $\bS \equiv \{ S^{(t)} \}_{t\in [m]}$ is a static support sequence (chosen uniformly at random) then $\tilde{\bS} \equiv \{ \tilde{S}^{(t)} \}_{t\in [m]}$ defined as
\[
\tilde{S}^{(t)} = \sum_{i\in [N]} \1 \{ t\in [\tau_{i-1}+1 ,\tau_i ] \pi_i (S^{(t)})
\]
is distributed as a support sequence generated according to the model described in Section~\ref{subsec:signal_model} with parameter $p$.  Hence for $s=1$ the problem difficulty is indeed non-decreasing in $p$.

Nonetheless the authors did not find an obvious way to extend this argument to general sparsities, because the signal components change their locations at possibly different times.  We note at this point that if one considered a more restrictive model where the entire support of the signal would reset simultaneously (a setting perhaps not vastly different to the one we are considering) would enable an argument similar to the above.
\end{remark}

We have the following result for these two extreme cases, which we prove at the end of the section.  Note that these are not asymptotic, and hold for any $n,m$ and $s$ satisfying the assumptions in the statement.

\begin{theorem}\label{thm:na_lower} Let $n,s,m \in \N$ be fixed (with $s\leq n$), consider a setup described in Section~\ref{sec:setup}, and suppose there is a non-adaptive sensing design and a test $\Psi$ satisfying
\[
R(\Psi)=\max_{i=0,1} \P_i (\Psi \neq i) \leq \varepsilon \ .
\]
\begin{itemize}
\item[(i)]{If $p=0$, $s\leq n/2$, $n/s\leq m$ and $\varepsilon \leq 1/(2e)$ then necessarily
\[
\mu \geq \sqrt{\frac{n}{2ms} \log \left( \frac{2 n}{s^2} \log \left( \tfrac{1}{e}-4\varepsilon \right)  +1 \right)} \ .
\]
}
\item[(ii)]{If $p=1$ and $\varepsilon <1/2$ then necessarily
\[
\mu \geq \sqrt{\log \left( \frac{n^2}{s^2 m} \log \left( 4(1-2\varepsilon )^2 +1 \right) +1 \right)} \ .
\]
}
\end{itemize}
\end{theorem}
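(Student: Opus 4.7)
The plan is to translate the testing-risk constraint $R(\Psi)\leq\varepsilon$ into a divergence lower bound via the standard minimax--Bayes reduction, and then upper-bound that divergence directly. Putting a uniform prior $\pi$ on $s$-subsets of $[n]$ and letting $\P_1$ be the mixture induced by $\pi$ together with the support dynamics, the generic inequality
\[
R(\Psi)\geq \tfrac{1}{2}\bigl(1-\|\P_1-\P_0\|_{TV}\bigr)\geq \tfrac{1}{2}\bigl(1-\tfrac{1}{2}\sqrt{\chi^2(\P_1,\P_0)}\bigr)
\]
forces $\chi^2(\P_1,\P_0)\geq 4(1-2\varepsilon)^2$ whenever $R(\Psi)\leq\varepsilon$. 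Both parts then reduce to an upper bound on $\chi^2$, and the two parts differ only in how the mixture structure plays out.

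For part (ii) ($p=1$), the computation is essentially immediate. Because the support is redrawn independently at each time step and the design is non-adaptive, $Y_1,\dots,Y_m$ are independent under both hypotheses with identical marginals: $\mathcal{N}(0,1)$ under $\P_0$, and the two-component mixture $\tfrac{s}{n}\mathcal{N}(\mu,1)+(1-\tfrac{s}{n})\mathcal{N}(0,1)$ under $\P_1$ regardless of the $A_t$. The single-coordinate chi-squared is $(s/n)^2(e^{\mu^2}-1)$, and tensorisation gives $\chi^2(\P_1,\P_0)+1=(1+(s/n)^2(e^{\mu^2}-1))^m$. Plugging into $\chi^2\geq 4(1-2\varepsilon)^2$, taking logarithms, and applying $\log(1+x)\leq x$ exactly once produces the announced bound.

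For part (i) ($p=0$) the support $S$ is drawn once and never changes. Writing $c_i=|\{t:A_t=i\}|$ for the non-adaptive allocation and applying the standard Gaussian-shift computation over the uniform prior,
\[
\chi^2(\P_1,\P_0)+1 = \E_{S,S'}\Bigl[\exp\Bigl(\mu^2\textstyle\sum_i c_i \mathbf{1}\{i\in S\cap S'\}\Bigr)\Bigr],
\]
with $S,S'$ independent uniform $s$-subsets of $[n]$. Both $(\mathbf{1}\{i\in S\})_{i}$ and $(\mathbf{1}\{i\in S'\})_{i}$ are negatively associated (indicator functions of a uniform random $s$-subset), they are independent across the two families, and NA is preserved under coordinatewise monotone functions of disjoint index pairs, so the products $Z_i=\mathbf{1}\{i\in S\}\mathbf{1}\{i\in S'\}$ are themselves NA. Applying the NA inequality to the monotone weights $z\mapsto e^{\mu^2 c_i z}$ yields
\[
\chi^2(\P_1,\P_0)+1 \leq \prod_{i=1}^n\bigl(1+(s/n)^2(e^{\mu^2 c_i}-1)\bigr),
\]
from which $\log(\chi^2+1)\leq (s/n)^2\sum_i(e^{\mu^2 c_i}-1)$. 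Maximising the right-hand side over $(c_i)$ with $\sum c_i=m$, $c_i\geq 0$, and combining with $\chi^2\geq 4(1-2\varepsilon)^2$, rearranges to the claimed lower bound on $\mu$.

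The delicate point, and the main obstacle, is entirely within part (i): the NA chi-squared bound is tight (up to constants) for roughly balanced allocations but is quite loose for strategies that concentrate all $m$ measurements on a handful of positions; in such cases $\chi^2$ blows up with $\mu$ while the true total variation cannot exceed $s/n$. The natural way around this is to split the non-adaptive designs by the number $k=|\{i:c_i>0\}|$ of distinct positions probed: when $k$ is small, a direct TV bound $\|\P_1-\P_0\|_{TV}\leq sk/n$, obtained from the fact that the likelihood ratio equals $1$ on the event $S\cap\{i:c_i>0\}=\emptyset$ (which has probability at least $1-sk/n$), already contradicts the required $\|\P_1-\P_0\|_{TV}\geq 1-2\varepsilon$ under the assumptions $s\leq n/2$ and $\varepsilon\leq 1/(2e)$; the remaining ``balanced'' designs, covering $k\gtrsim n/s$ positions, are exactly where the NA chi-squared bound bites and yields the claimed $\mu$ lower bound.
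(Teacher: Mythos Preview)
Your treatment of part~(ii) is correct and matches the paper's: when $p=1$ the observations are an i.i.d.\ sparse mixture regardless of the design, so the untruncated second moment (equivalently, your $\chi^2$ computation) gives the result directly.

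The gap is in part~(i). You correctly identify that the untruncated $\chi^2$ bound fails for concentrated allocations, but your proposed fix --- splitting designs according to $k=|\{i:c_i>0\}|$ --- does not close it. A design can have $k\gtrsim n/s$ distinct probed positions and still be arbitrarily unbalanced: take $k-1$ positions with $c_i=1$ and one position with $c_i=m-k+1$. Such a design survives your ``small $k$'' case (since $sk/n$ is not small), yet your NA bound $\log(\chi^2+1)\leq (s/n)^2\sum_i(e^{\mu^2 c_i}-1)$ is dominated by the single term $e^{\mu^2(m-k+1)}$ and yields no useful constraint on $\mu$. The problem is not the number of probed positions but the \emph{maximum} allocation $\max_i c_i$, and no case split on $k$ controls that.

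The paper handles this with a \emph{truncated} second moment rather than a case split on designs. For an arbitrary (possibly random) non-adaptive design, set $A_{\rm big}=\{i:c_i>2ms/n\}$ and $A_{\rm small}=[n]\setminus A_{\rm big}$; by pigeonhole $|A_{\rm big}|\leq n/(2s)$. One then replaces $L$ by $\tilde L(\vec y)=\E\bigl[\1\{S\subseteq A_{\rm small}\}\,\exp(\cdots)\,\big|\,\vec y\bigr]\leq L(\vec y)$ and uses the elementary inequality
\[
\E_0|L-1|\leq \sqrt{\E_0[\tilde L^2]-2\E_0[\tilde L]+1}+1-\E_0[\tilde L]\ .
\]
Because $|A_{\rm big}|\leq n/(2s)$ one gets $\E_0[\tilde L]=\P(S\subseteq A_{\rm small})\geq 1/e$ uniformly over designs, and on the truncation event every sampled support element satisfies $c_i\leq 2ms/n$, so the same NA calculation you wrote now gives $\E_0[\tilde L^2]\leq\bigl(1+(s/n)^2(e^{2ms\mu^2/n}-1)\bigr)^n$, again uniformly over designs. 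Combining these with $\E_0|L-1|\geq 2-4\varepsilon$ yields the stated bound. In short, the missing idea is to truncate the \emph{prior} on $S$ (to supports avoiding the over-sampled positions) rather than to split the class of designs.
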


Considering the case $p=1$, the result above tells us that when $m$ scales like $n/s$, the signal strength needs to scale as $\sqrt{\log (n/s)}$ for detection to be possible.  This is the same scaling that is guaranteed by Theorem~\ref{thm:upper}.  This should come as no surprise, since when $p=1$ we have $\1 \{ A_t \in S^{(t)} \} \sim \Ber (s/n)$ independently for every $t\in [m]$, regardless of the choice of $A_t$.  Hence the resulting measurements $\{ Y_t \}_{t\in [m]}$ follow the same mixture distribution under the alternative, no matter what sampling strategy we use.  Although settings like these have been studied extensively (see \cite{HC_Donoho_2004} and references therein), those works consider asymptotic results.  As such we find it useful to prove a non-asymptotic result for our particular problem, though we point out that this can be simply established by following the steps of the referenced proofs.

Contrasting with this one has the (arguably) more interesting case when the signal is static ($p=0$).  Although the problem of detecting static signals have been the focus of much work (see for instance \cite{Minimax_Ingster_2000, Det_Ingster_2002}), a key difference in our setting is that the sensing actions of the experimenter are not fixed, but are free to choose.  This results in a qualitatively different statement, as the following remark attests.

\begin{remark}\label{rem:NA_lower}
In particular, the first part of the theorem above is interesting in its own right.  It tells us that, for static signals, if the experimenter is free to choose the sensing actions, the signal magnitude needs to scale at least as $\sqrt{\tfrac{n}{sm} \log \tfrac{n}{s^2}}$ for detection to be possible.  It is easy to see that this rate can (almost) be achieved using a sub-sampling scheme: select roughly $n/s$ components at random and collect an equal number of samples of each.  Average the observations for each component separately, and declare a signal if any of these averages is above the threshold $\sqrt{\tfrac{n}{sm} \log \tfrac{n}{s}}$.  Basic calculations show that this procedure has low probability of error.

Contrasting this, the lower bounds of \cite{Minimax_Ingster_2000, Det_Ingster_2002}, which pertain the situation where we measure each component of the vector exactly once, scale as $\sqrt{\log \tfrac{n}{s^2}}$.  Hence, the additional flexibility of where to sample buys us a multiplicative factor of $\sqrt{\tfrac{n}{sm}}$, even though no feedback from the observations is used.  If we can use this feedback, we can also get rid of the log-factor, as shown in \cite{AS_Rui_2012}.
\end{remark}

\begin{remark}
In light of the previous remark, the authors suspect the lower bound in part (i) of the Theorem is slightly loose.  Namely, the term $s^2$ appears to be due to slack in the second moment method in Equation~\ref{eqn:slack}, and it might be possible to replace it by $s$ via a more sophisticated truncation argument.
\end{remark}

The result above tells us that in the regime $m \approx n/s$, the signal strength needs to scale as $\sqrt{\log (n/s^2)}$ for detection to be possible --- approximately the same magnitude as required for $p=1$.  On the other hand Theorem~\ref{thm:upper} guarantees the existence of an adaptive sensing procedure that reliably detects static signals of constant magnitude (in terms of the parameters $n$ and $s$) using roughly $n/s$ measurements.  This shows that adaptive sensing gains over non-adaptive sensing become more pronounced as the speed of change decreases.

Finally we point out once more that the requirements for the signal strength of Theorem~\ref{thm:na_lower} are essentially the same for $p=0$ and $p=1$.  Although we did not succeed in proving a result that holds for any value of $p$ due to technical difficulties, we conjecture that the lower bound or general values of $p$ should interpolate between these two extremes.  In other words, we suspect that the problem difficulty is essentially independent of $p$ in the non-adaptive case when $m$ is of the order (or slightly larger than) $n/s$.  This conjecture is further supported by numerical simulations of testing error probability presented in Section~\ref{sec:sim}.

\begin{proof}[Proof of Theorem~\ref{thm:na_lower}]
\emph{(i):} To prove the claim above for $p=0$ we use the truncated second moment method, an approach suggested by \cite{Ingster_1997} to address problems in the regular second moment method when the distribution of the likelihood ratio under the null has tails that are too heavy (and therefore too large of a second moment).  First, note that
\begin{equation}\label{eqn:TVbound}
\max_{i=0,1} \P_i (\Psi \neq i) \geq \frac{1}{2} \sum_{i=0}^1 \P_i (\Psi \neq i) = \frac{1}{2} \left( 1-\frac{1}{2} \E_0 \left( |L(\vec{Y})-1|\right) \right) \ ,
\end{equation}
where $L(\vec{Y})$ denotes the likelihood-ratio of the observations $\vec{Y}=(Y_1,\ldots,Y_m)$, and $\E_0$ is the expectation taken with respect to the distribution of the observations $\vec{Y}$ under the null.  The second equality is well known (see for instance \cite{Comb_Testing_Lugosi_2010}), and can be easily checked using simple algebraic manipulations.

A common way to proceed is to use either Cauchy-Schwarz's or Jensen's inequality to get
\[
\E_0 \left( |L(\vec{Y})-1|\right) \leq \sqrt{\E_0 \left( (L(\vec{Y})-1)^2 \right)} = \sqrt{\Var_0 (L(\vec{Y}))} \ .
\]
Therefore, to get a lower bound on the risk we need to get a good upper bound on the variance of the likelihood ratio.  This is often referred to as the second moment method.  However, in some cases there is a lot of slack in the bound and the variance is too large to yield interesting results --- so a modification of the above argument is needed.

Let $\mathcal{Y}$ denote the sample space, and let $\tilde{L}(\vec{y}): \mathcal{Y} \to \R$ be an arbitrary function.  Instead of using the Cauchy-Schwarz inequality right away, let us continue the first chain of inequalities as
\begin{align*}
\E_0 \left( |L(\vec{Y})-1|\right) & = \E_0 \left( |L(\vec{Y}) -\tilde{L}(\vec{Y}) + \tilde{L}(\vec{Y}) -1| \right) \\
& \leq \E_0 \left( |\tilde{L}(\vec{Y}) -1| \right) + \E_0 \left( |L(\vec{Y}) -\tilde{L}(\vec{Y})| \right) \\
& \leq \sqrt{\E_0 \left( \tilde{L}(\vec{Y})^2 \right) -2\E_0 \left( \tilde{L}(\vec{Y})\right) +1} + \E_0 \left( |L(\vec{Y}) -\tilde{L}(\vec{Y})| \right) \ .
\end{align*}
Furthermore, if $\tilde{L}(\vec{y})\leq L(\vec{y})$ for every $\vec{y}\in \mathcal{Y}$, then we have
\begin{equation}\label{eqn:truncated_method}
\E_0 \left( |L(\vec{Y})-1|\right) \leq \sqrt{\E_0 \left( \tilde{L}(\vec{Y})^2 \right) -2\E_0 \left( \tilde{L}(\vec{Y})\right) +1} +1 -\E_0 \left( \tilde{L}(\vec{Y}) \right) \ .
\end{equation}
In order to proceed, we need to lower bound $\E_0 (\tilde{L}(\vec{Y}))$ and upper bound $\E_0 (\tilde{L}(\vec{Y})^2)$.  To get a sharp lower bound with this method, we need a good choice for $\tilde{L}(\vec{y})$.  This is often achieved by truncating the original likelihood-ratio by multiplying with the indicator of a well chosen event.

In our setting the likelihood-ratio can be expressed in a convenient way.  Note that under the null the observations are independent standard normal regardless of the sensing actions, hence
\[
\d \P_0 (\vec{y}) = \prod_{t\in [m]} f_0(y_t) \ ,
\]
where $f_\mu (\cdot )$ is the density of a normal random variable with mean $\mu$ and variance 1.  Under the alternative, the density of the observations is a mixture.  Recall that we are considering the case $p=0$ therefore the signal support $S^{(t)}$ does not change over time, namely $S^{(t)}=S$ for all $t\in[m]$.  The conditional density of the observations given the sensing actions $A=(A_1 ,\dots ,A_m )$ and the support $S$ can be written as
\[
\d \P_1 (\vec{y}|A,S) = \prod_{t\in [m]} \left( \1 \{ A_t \in S\} f_\mu (y_t) + \1 \{ A_t \notin S\} f_0 (y_t) \right) \ .
\]
Hence the likelihood-ratio can be expressed as
\[
L(\vec{y}) = \E \left( \exp \left( \sum_{t\in [m]} \1 \{ A_t \in S\} \log \frac{f_\mu (y_t)}{f_0 (y_t)} \right) \right) \ .
\]

Using the second moment method without truncation, one would need to upper bound the second moment of the likelihood ratio above.  Unfortunately, this yields a loose bound on $\mu$.  The reason is that the second moment will be extremely large when the signal is sampled often, even if this event is relatively rare.  In other words, if $\sum_{t\in [m]} \1 \{ A_t \in S\}$ is large one will face problems.  Note that, since the support is chosen uniformly at random,
\[
\E \left( \sum_{t\in [m]} \1 \{ A_t \in S \} \right) = ms/n \ .
\]
However, for certain choices of design $\sum_{t\in [m]} \1 \{ A_t \in S\}$ can be very far from the mean (e.g., if $A_1=\cdots=A_m$ then $\sum_{t\in [m]} \1 \{ A_t \in S\}$ is equal to $m$ with probability $s/n$ and zero otherwise).  This causes the second moment of the likelihood ratio to be extremely large.  To resolve this issue we truncate the likelihood-ratio to exclude these somewhat troublesome instances.

Begin by defining the sets
$$A_{\rm big} = \{ i:\ \sum_{t\in [m]} \1 \{ A_t =i \} > 2ms/n \} \quad\text{ and }\quad A_{\rm small} = [n] \setminus A_{\rm big}\ .$$
In words, for a given sensing design the signal components are divided in two disjoint subsets: one subset contains signal components that are sampled often, whereas the other contains the remaining components.  A simple pigeon hole principle shows that $|A_{\rm big}|\leq n/(2s)$.  Now define
\[
\tilde{L}(\vec{Y}) = \E \left[\left.  \1\{S\subseteq A_{\rm small}\} \exp \left( \sum_{t\in [m]} \1 \{ A_t \in S\} \log \frac{f_\mu (Y_t)}{f_0 (Y_t)}\right)\right| \vec{Y}\right] \ .
\]
Clearly $\tilde{L}(\vec{y}) \leq L(\vec{y})$ for all $\vec{y}\in \mathcal{Y}$, and so we can apply \eqref{eqn:truncated_method} by controlling the first and second moments of $\tilde L(\vec{Y})$.

First note that, since the event $S\subseteq A_{\rm small}$ does not involve the observations $\vec{Y}$ we can easily conclude that
\[
\E_0 \left( \tilde{L}(\vec{Y}) \right) = \P (S\subseteq A_{\rm small}) = \E \left( \P \left(S\subseteq A_{\rm small}|\vec{A} \right) \right) \ ,
\]
where $\vec{A}\equiv (A_1 ,\dots ,A_m )$.  The conditional probability on the right can be lower bounded as
\begin{align*}
\P \left(S\subseteq A_{\rm small}|\vec{A} \right) & = \frac{{|A_{\rm small}| \choose s}}{{n \choose s}} = \frac{|A_{\rm small}| (|A_{\rm small}|-1)\dots (|A_{\rm small}|-s+1)}{n(n-1)\dots (n-s+1)} \\
& \geq \left( \frac{|A_{\rm small}|-s+1}{n-s+1} \right)^s \geq \left( \frac{n\left(1-\tfrac{1}{2s}\right)-s+1}{n-s+1} \right)^s \\
& = \left( 1-\frac{n}{2s(n-s+1)} \right)^s \geq \left( 1-\frac{1}{s} \right)^s\\
& \geq \frac{1}{e}\ ,
\end{align*}
where we used $|A_{\rm small}|\geq n\left(1-\frac{1}{2s}\right)$ and $1\leq s\leq n/2$.

We are left with upper bounding the second moment of $\tilde{L}(\vec{Y})$.  First, note that in the non-adaptive sensing setting $A=(A_1,\dots ,A_m)$ and $S$ are independent.  The proof proceeds by careful conditioning on these random quantities.  We use Jensen's inequality to write
\begin{align*}
\E_0 (\tilde{L}(Y)^2) & = \E_0 \Bigg[ \Bigg(\E \Bigg[\1\{S\subseteq A_{\rm small}\} \exp \Bigg( \sum_{t\in [m]} \1 \{ A_t \in S\} \log \frac{f_\mu (Y_t)}{f_0 (Y_t)}\Bigg)\Bigg| \vec{Y}\Bigg] \Bigg)^2\Bigg]\\
& \leq  \E_0\Bigg[\E\Bigg[\Bigg(\E \Bigg[\underbrace{\1\{S\subseteq A_{\rm small}\} \exp \Bigg( \sum_{t\in [m]} \1 \{ A_t \in S\} \log \frac{f_\mu (Y_t)}{f_0 (Y_t)}\Bigg)}_{h(S,\vec{Y},\vec{A})}\Bigg| \vec{Y},\vec{A}\Bigg]\Bigg)^2\Bigg|\vec{Y}\Bigg]\Bigg]\ .
\end{align*}
At this point it is convenient to introduce an extra random variable $S'$, independent from $S$ and identically distributed.  Then
\begin{align*}
\left(\E \left[\left.  h(S,\vec{Y},\vec{A})\right| \vec{Y},\vec{A}\right]\right)^2 &= \E \left[\left.  h(S,\vec{Y},\vec{A})\right| \vec{Y},\vec{A}\right]\ \E \left[\left.  h(S',\vec{Y},\vec{A})\right| \vec{Y},\vec{A}\right]\\
&= \E \left[\left.  h(S,\vec{Y},\vec{A})h(S',\vec{Y},\vec{A})\right| \vec{Y},\vec{A}\right]\ .
\end{align*}
Therefore we conclude that
\begin{align*}
\E_0\left[\tilde L(\vec{Y})^2\right] &\leq \E_0 \left[\1\{S\subseteq A_{\rm small}\} \1\{S'\subseteq A_{\rm small}\} \exp \left( \sum_{t\in [m]} \left(\1\{ A_t \in S\}+\1\{ A_t \in S'\}\right) \log \frac{f_\mu (Y_t)}{f_0 (Y_t)}\right)\right]\\
&= \E \left[ \E_0\left[\left.  \1\{S,S'\subseteq A_{\rm small}\} \exp \left( \sum_{t\in [m]} \left(\1\{ A_t \in S\}+\1\{ A_t \in S'\}\right) \log \frac{f_\mu (Y_t)}{f_0 (Y_t)}\right)\right|\vec{A},S,S'\right]\right]\\
&= \E \left[\1\{S,S'\subseteq A_{\rm small}\}\prod_{t\in [m]} \E_0\left[\left.  \exp \left(\left(\1\{ A_t \in S\}+\1\{ A_t \in S'\}\right) \log \frac{f_\mu (Y_t)}{f_0 (Y_t)}\right)\right|\vec{A},S,S'\right]\right]\\
&= \E \left[\1\{S,S'\subseteq A_{\rm small}\} \exp\left(\mu^2 \sum_{t\in [m]} \1\{A_t\in S\cap S'\}\right)\right]\ .
\end{align*}
We are now in a good position to finish the bound.  Note that, when $S,S'\subseteq A_{\rm small}$ we have $\sum_{t\in[m]} \1\{A_t=i\}\leq 2ms/n$.  It follows that
\begin{align*}
\E_0\left[\tilde L(\vec{Y})^2\right] &\leq \E\left[\E\left[\left.\1\{S,S'\subseteq A_{\rm small}\} \exp\left(\mu^2 \sum_{i\in[n]} \1\{i\in S\cap S'\}\sum_{t\in[m]} \1\{A_t=i\}\right) \right|\vec{A}\right]\right]\\
&\leq \E\left[\E\left[\left.  \exp\left(\frac{2ms\mu^2}{n} \sum_{i\in[n]} \1\{i\in S\cap S'\}\right) \right|\vec{A}\right]\right]\\
&= \E \left[ \exp \left( \lambda \sum_{i\in[n]} \1\{i\in S\cap S'\} \right) \right]\ ,
\end{align*}
where $\lambda=\frac{2ms\mu^2}{n}$.  The beauty of the last expression is that it no longer involves the sensing actions or the observations, and depends only on the support.  Using the negative association property of $\1 \{ i\in S\cap S'\}$ as introduced in \cite{kumar_1983} we can finally bound the second moment of the truncated likelihood as
\begin{align}
\E_0 \left(\tilde{L}(\vec{Y})^2\right) & \leq \E\left[\exp\left(\lambda\sum_{i\in[n]} \1\{i\in S\cap S'\}]\right)\right]\nonumber\\
& = \E\left[\prod_{i\in[n]} e^{\lambda \1\{i\in S\cap S'\}}\right] \leq \prod_{i\in[n]} \E\left[ e^{\lambda \1\{i\in S\cap S'\}}\right]\nonumber\\
& = \left( 1+ \frac{s^2}{n^2} \left( e^\lambda -1 \right) \right)^n = \left( 1+ \frac{s^2}{n^2} \left( e^{2\mu^2 ms/n} -1 \right) \right)^n \label{eqn:slack}\ .
\end{align}
We have now all the ingredients needed to complete the proof.  Note that, on one hand, if $\max_{i=0,1} \P_i (\Psi \neq i) \leq \varepsilon$ then necessarily $\E_0[|L(\vec{Y})-1|]\geq 2-4\varepsilon$.  On the other hand, from \eqref{eqn:truncated_method} we know that
\begin{align*}
\E_0[|L(\vec{Y})-1|] &\leq \sqrt{\E_0 \left( \tilde{L}(\vec{Y})^2 \right) -2\E_0 \left( \tilde{L}(\vec{Y})\right) +1} +1 -\E_0 \left( \tilde{L}(\vec{Y}) \right)\\
&< \left(1+ \frac{s^2}{n^2} \left( e^{2\mu^2 ms/n} -1\right) \right)^{n/2} +\frac{1}{2}\ .
\end{align*}
This means that
$$\frac{s^2}{n^2}\left(e^{2\mu^2 ms/n} -1\right) > \left(\frac{3}{2}-4\varepsilon\right)^{2/n}-1\geq \frac{2}{n}\log\left(\frac{3}{2}-4\varepsilon\right)\ ,$$
where the last inequality uses the fact that $x-1 \geq \log x$.  The final result ensues by simple algebraic manipulation.

\vspace{0.2cm}
\emph{(ii):} Proving the claim for $p=1$ requires considerably less technical effort.  In particular we can use the original second moment method, without truncation.  Therefore, we simply need to upper bound the second moment of the likelihood-ratio.

Using essentially the same calculations as before, we get
\[
\E_0 \left( L(Y)^2 \right) = \E \left[ \exp \left( \mu^2 \sum_{t\in [m]} \1 \{ A_t \in S^{(t)} \cap S'^{(t)} \} \right) \right] \ .
\]
When $p=1$ we have that $\1 \{ A_t \in S^{(t)} \cap S'^{(t)} \} \sim \Ber (s^2 /n^2 )$ and these random variables are independent, so we can simply evaluate the above expression and get 
\[
\E_0 \left( L(Y)^2 \right) = \left( 1+ \frac{s^2}{n^2} \left( e^{\mu^2} -1 \right) \right)^m \ .
\]

Plugging this into the inequalities above (not using the truncation), we get
\[
\mu \geq \sqrt{\log \left( \frac{n^2}{s^2} \left( \sqrt[m]{4(1-2\varepsilon )^2}-1 \right) +1 \right)} \ .
\]
The desired result follows by using $x-1 \geq \log x$.
\end{proof}


\subsection{Adaptive sensing} \label{sec:lower_a}

In the adaptive sensing setting, the decision where to sample at time $t$ can depend on information gleaned up to that point.  For the static case ($p=0$) the fundamental limits of the detection problem using adaptive sensing have been studied in \cite{AS_Rui_2012}.  Those lower bounds are derived for a slightly more general setting than the one considered here, in that the total precision of the measurements is constrained, but not the total number of measurements.  Nevertheless, this bound is still valid in our setting, and states that for any adaptive sensing and testing procedure $\Psi$ if
\[
\max \{ \P_0 (\Psi \neq 0 ), \P_1 (\Psi \neq 1) \} \leq \varepsilon
\]
then necessarily
\[
\mu \geq \sqrt{\frac{2 n}{sm} \log \frac{1}{2\varepsilon}} \ .
\]

In the regime $m \approx n/s$ the bound states that the signal strength needs scale as $\sqrt{\log (1/\varepsilon )}$.  This coincides (up to constants) with the bound of Theorem~\ref{thm:upper} when $p\leq 2/\log (n/s)$.  This tells us that when the signal changes slowly enough, the problem is essentially non-dynamic in nature.

On the other extreme end of the spectrum is the case $p=1$.  We have seen previously that in this case the non-adaptive and adaptive sensing settings are identical, by virtue of the fact that $\1 \{ A_t\in S^{(t)} \} \sim \Ber (s/n)$ for every $t\in [m]$ and independent, regardless of the choice of $A_t$.

What remains to be understood are the fundamental limits for the intermediate regime.


\subsubsection{Non-extreme dynamics ($p\in (0,1)$)}

For general values of $p$ we start by considering the case $s=1$, which we call the 1-sparse case.  This case is considerably simpler to analyze than the general $s$-sparse setting, as now whenever the active component changes the entire signal resets.  This effectively creates a number of independent static signals on the time horizon.

\begin{theorem}\label{thm:1sparse_lower}
Consider the setup in Section~\ref{sec:setup} and suppose there exists a test $\Psi$ such that
\[
\max_{i=0,1} \P_i (\Psi \neq i) \leq \varepsilon \ .
\]
\begin{itemize}
\item[(i)]{The signal strength must satisfy
\[
\mu \geq \sqrt{\frac{2n}{sm} \log \frac{1}{4\varepsilon}} \ .
\]
}
\item[(ii)]{
When $s=1$ and $p\geq 8/m$, then necessarily
\[
\mu \geq \sqrt{\frac{p}{2c} \log \left( \log \left( \left( \tfrac{5}{4} -4\varepsilon \right)^2 +\tfrac{1}{2} \right) \frac{p^2 n^2}{4 c^2 m} +1 \right)} \ ,
\]
with $c=6+3\log 2$.}
\end{itemize}
\end{theorem}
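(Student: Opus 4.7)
For part (i), the plan is a standard information-theoretic lower bound for adaptive sensing. The starting point is $2R(\Psi) \geq 1 - \text{TV}(\P_0, \P_1)$ together with the data-processing (Fano-type) inequality $(1-2\varepsilon)\log\tfrac{1-\varepsilon}{\varepsilon} \leq \KL(\P_0 \| \P_1)$, which, unlike Pinsker, produces the desired $\log(1/\varepsilon)$ factor. The mixture structure of $\P_1$ is handled by the convexity bound $\KL(\P_0 \| \E_S \P_{1,S}) \leq \E_S[\KL(\P_0 \| \P_{1,S})]$, where $S = (S^{(1)}, \ldots, S^{(m)})$ denotes the random support trajectory. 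For each fixed $S$, the Gaussian chain rule gives $\KL(\P_0 \| \P_{1,S}) = \tfrac{\mu^2}{2}\,\E_0[\sum_t \1\{A_t \in S^{(t)}\}]$; crucially, under $\P_0$ the observations are pure noise and thus $A_t$ is independent of $S^{(t)}$, and since the support has uniform marginals $\E_0\E_S[\1\{A_t \in S^{(t)}\}] = s/n$. Summing over $t$ yields $\KL(\P_0\|\P_1) \leq \mu^2 m s/(2n)$, and combining with the Fano-type bound gives the stated condition on $\mu$ after simple algebra.

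For part (ii), I propose to use the second moment method in the spirit of Theorem~\ref{thm:na_lower}(i), but adapted to handle both adaptivity and the Markov dynamics simultaneously. Writing $L = \E_S[L_S]$ with $L_S = \exp\bigl(\sum_t \1\{A_t = S^{(t)}\}(\mu Y_t - \mu^2/2)\bigr)$ and introducing an independent copy $S'$ of the trajectory, one has $\E_0[L^2] = \E_{S,S'}[\E_0[L_S L_{S'}]]$. The pivotal observation is that, under $\P_0$, the adaptive sensing $A = (A_1, \ldots, A_m)$ is a measurable function of pure-noise observations and is therefore independent of $(S, S')$. Conditioning on $A$ and performing a direct Gaussian MGF calculation collapses the inner expectation to $\E_0[L_S L_{S'}\,|\,A, S, S'] = \exp(\mu^2 \,|\{t : A_t = S^{(t)} = S'^{(t)}\}|)$. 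Expanding $\exp(\mu^2 \cdot \#) = \prod_t (1 + (e^{\mu^2}-1)\1\{A_t = S^{(t)} = S'^{(t)}\})$ and using the independence of $S$ and $S'$ then gives the representation
\[
\E_0[L^2] = \E_A\!\left[\sum_{T \subseteq [m]} (e^{\mu^2}-1)^{|T|} \,\P\!\left(S^{(t)} = A_t \ \forall t \in T\right)^{\!2}\right].
\]

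The next step evaluates the inner probability using the explicit $t$-step kernel of the support chain, $K^t(i,j) = (1-p)^t\,\1\{i=j\} + (1-(1-p)^t)/n$, which factorizes the joint probability along any increasing $T = \{t_1 < \cdots < t_k\}$ into a telescoping product of terms $(1-p)^{\Delta_i}\,\1\{A_{t_i} = A_{t_{i-1}}\} + (1-(1-p)^{\Delta_i})/n$ with $\Delta_i = t_i - t_{i-1}$. The plan is to split the sum over $T$ according to the pattern of \emph{coherent} transitions (consecutive sampled locations agree, contribution driven by the geometric factor $(1-p)^{\Delta}$) versus \emph{incoherent} ones (contribution of order $1/n$ per step). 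Summing the coherent chain using a geometric series in $(1-p)^\Delta$ and controlling incoherent contributions via the uniform $1/n$ bound should yield an estimate of the form $\E_0[L^2] - 1 \lesssim (m/(n^2 p)) \cdot (e^{c\mu^2/p} - 1)$ up to absolute constants, uniformly in the sampling strategy. The assumption $p \geq 8/m$ ensures that the coherence length $1/p$ is comfortably within the horizon, and a truncation step analogous to the $A_{\rm small}/A_{\rm big}$ partition of Theorem~\ref{thm:na_lower}(i) may be needed to handle sampling strategies that concentrate too many measurements on a single location (which would otherwise inflate the coherent contribution). Plugging the resulting bound on $\E_0[L^2]$ into $2R(\Psi) \geq 1 - \tfrac{1}{2}\sqrt{\E_0[L^2]-1}$ and demanding the right-hand side be at most $\varepsilon$ yields the stated condition on $\mu$, with the constant $c = 6 + 3\log 2$ emerging from the geometric-series estimates.

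The main obstacle I anticipate is controlling the coherent contribution: the combinatorial growth $(e^{\mu^2}-1)^{|T|}$ must be traded off delicately against the geometric decay $(1-p)^\Delta$, and this tension is precisely what forges the $\log(p^2 n^2/m)$ factor in the final bound. Extra care is needed because the sensing strategy $A$ is random (albeit noise-driven) under $\P_0$, so every estimate has to hold uniformly over possible strategies, which is what motivates the auxiliary truncation. In contrast, the small-$p$ regime should be comparatively easy, as most of the sum over $T$ is negligible once $\Delta_i p$ is bounded away from zero.
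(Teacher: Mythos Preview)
Your treatment of part~(i) is essentially the paper's argument: bound $\KL(\P_0\|\P_1)$ by convexity/Jensen, use that the marginal of each $S^{(t)}$ is uniform so the expected number of ``hits'' is $ms/n$, and convert the KL bound into a condition on $\mu$. The paper uses Tsybakov's inequality $\max_i\P_i(\Psi\neq i)\geq\tfrac14 e^{-\KL}$ in place of your Fano-type bound, which is what produces the exact $\log(1/4\varepsilon)$ constant, but this is cosmetic.

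Part~(ii), however, has a genuine gap and also departs substantially from the paper. The step you call ``pivotal'' is incorrect: you assert that conditioning on $A$ lets you compute $\E_0[L_S L_{S'}\mid A,S,S']$ via the Gaussian MGF, obtaining $\exp(\mu^2\,|\{t:A_t=S^{(t)}=S'^{(t)}\}|)$. But $A$ is a measurable function of $\bY$, so conditioning on $A$ destroys the i.i.d.\ $N(0,1)$ structure of the observations under $\P_0$, and the MGF no longer factorises. (A two-step example with $A_2$ depending on the sign of $Y_1$ already breaks the identity.) The independence of $A$ from $(S,S')$ under $\P_0$ is true but does not rescue the calculation; what you would actually obtain is $\E_0[L_SL_{S'}]=\E_{\Q_{S,S'}}[\exp(\mu^2\sum_t\1\{A_t=S^{(t)}=S'^{(t)}\})]$, where $\Q_{S,S'}$ is a tilted measure under which the adaptive design is \emph{attracted} toward $S\cup S'$. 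Controlling the exponent under this measure is precisely the hard part, and your kernel expansion over subsets $T\subseteq[m]$ inherits the same problem: the ``$\E_A$'' in your display is not the $\P_0$-law of $A$.

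The paper avoids your kernel expansion entirely. It conditions on the change-point process $\btheta$, which breaks $[m]$ into blocks on which the support is \emph{static} and independent uniform; it then truncates on the event $\Omega_c=\{\text{all block lengths }\leq 2c/p\}$ rather than on an $A_{\rm small}/A_{\rm big}$ partition. The constant $c=6+3\log 2$ is not a byproduct of geometric series in $(1-p)^\Delta$ as you anticipate --- it arises from a combinatorial lemma showing $\P(\Omega_c)>1/4$ when $c\geq 6+3\log 2$ and $p\geq 8/m$. The second-moment bound then proceeds block-by-block: within each block the exponent is at most $l_j$ times an indicator that the fixed value $S_j=S'_j$ lies in the (at most $l_j$-element) set $\{A_t\}$ visited, giving the factor $1+\tfrac{l_j^2}{n^2}(e^{l_j\mu^2}-1)$ per block and the crude bound $N\leq m$ for the number of blocks. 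Your coherent/incoherent decomposition and the $A_{\rm small}/A_{\rm big}$ truncation are both aimed at the wrong difficulty.
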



We provide the proof of Theorem~\ref{thm:1sparse_lower} at the end of the section.  Part (i) holds regardless of the values of $p$ and $s$, so it is necessarily loose when $p$ is large.  On the other hand part (ii) already captures the role of the rate of change $p$, and it is the main contribution in this result.

Let us compare the above bound on $\mu$ with the guarantees for Algorithm~\ref{basic_algorithm} proved in Theorem~\ref{thm:upper}.  Note that $c$ and $\varepsilon$ are constants.  Thus the bound on the signal strength in the above result scales as $\sqrt{p \log(p^2 n^2/m)}$.  Recall that we are interested in the regime $m\approx n/s$ and that $s=1$, as we are considering the 1-sparse case.  In that setting the bound above scales as $\sqrt{p \log (p^2 n)}$.  Also note that the scaling of the performance guarantee of Theorem~\ref{thm:upper} matches that of the lower bound from \cite{AS_Rui_2012} when $p<1/\log n$.  Hence we only need to assess the result of Theorem~\ref{thm:1sparse_lower} when $p\geq 1/\log n$.  In this case, the scaling of that bound is at least as big as $\sqrt{p(\log n - 2\log \log n)} \approx \sqrt{p \log n}$.  This shows near-optimality of the algorithm proposed in Section~\ref{sec:upper}, in terms of its scaling in the parameters $n$ and $p$.

Due to technical reasons we were unable to generalize the result for signals of sparsity greater than one.  As noted above, a key feature of the 1-sparse case is that the signal decouples into independent static signals over time.  This key property is lost when we consider signals with sparsity greater than one, and this proves to be a major obstacle to obtain a rigorous formal proof.  However, we conjecture that a similar result to the one above holds for $s$-sparse signals, with $n$ replaced by $n/s$.  The heuristic behind this is that a general $s$-sparse signal of dimension $n$ should behave very much like an $s$-fold concatenation of an 1-sparse signal of dimension $n/s$, when viewed through the lens of one measurement per time-index (one expects this to actually be a statistical reduction, and this problem should be statistically easier than the original one).  For such a signal the result above would follow directly with the signal dimension $n$ replaced by $n/s$.

\begin{conjecture}
When $p\geq 8/m$, if the risk of an adaptive sensing and test procedure is less or equal to $\varepsilon$ then necessarily
\[
\mu \geq \sqrt{\frac{p}{2c} \log \left( \log \left( \left( \tfrac{5}{4} -4\varepsilon \right)^2 +\tfrac{1}{2} \right) \frac{p^2 n^2}{4 c^2 s^2 m} +1 \right)} \ ,
\]
with $c=6+3\log 2$.
\end{conjecture}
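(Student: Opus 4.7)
The plan is to reduce the $s$-sparse problem to a product of $s$ independent 1-sparse sub-problems, as the authors suggest in the conjecture's discussion, and then inherit the bound of Theorem~\ref{thm:1sparse_lower}(ii). Concretely, I would partition $[n]$ into $s$ equal-sized blocks $B_1,\dots,B_s$ and introduce an auxiliary detection problem: under the null the signal is zero and under the alternative the support contains exactly one active component per block, each performing a 1-sparse dynamic walk within its own block with rate $p$. The goal is first to bound the detection risk for this decomposed problem and then to transfer the bound to the original setting.

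Assuming the decomposition is justified, the calculation is essentially the one driving Theorem~\ref{thm:1sparse_lower}(ii), repeated in parallel across blocks. The key observation is that, under the auxiliary alternative, the truncated likelihood ratio factorizes as $\tilde L(\bY) = \prod_{i=1}^{s} \tilde L_i(\bY)$, where each $\tilde L_i$ involves only measurements landing in block $B_i$. Writing $M_i$ for the (possibly adaptively chosen) number of such measurements, independence of the sub-problems yields $\E_0[\tilde L(\bY)^2] = \prod_i \E_0[\tilde L_i(\bY)^2]$, and each factor is bounded by the 1-sparse second-moment estimate with $n$ replaced by $n/s$ and $m$ replaced by $M_i$. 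Plugging this product into the general inequality \eqref{eqn:truncated_method} and optimizing over $\{M_i\}$ subject to $\sum_i M_i = m$ should produce the conjectured threshold, with the dominant contribution coming from the block on which the adaptive strategy spends the most effort, which to leading order is $M_i \approx m/s$.

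The main obstacle, and the reason the authors leave the result as a conjecture, is that the auxiliary problem is \emph{not} formally a sub-problem of the original: the true dynamics allow jumps across blocks, and the prior on $S^{(1)}$ is uniform over all $s$-subsets rather than restricted to one-per-block configurations. Transferring a hardness result from the auxiliary to the original problem thus requires a coupling or likelihood-ratio comparison showing the two measurement laws are within $o(1)$ in total variation on the horizon $m \sim n/s$, which is nontrivial because each jump in the original dynamics leaves its originating block with probability close to one. A second technical difficulty is that under adaptive sensing the allocation $M_i$ is a functional of past observations, so the product factorization above must be replaced by a KL-divergence chain rule along the filtration generated by $\{A_t, Y_t\}$, in the spirit of the adaptive lower bounds of \cite{AS_Rui_2012}. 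I expect the heart of a rigorous proof to lie precisely in combining these two ingredients — the cross-block coupling and the adaptive chain rule — rather than in the 1-sparse second-moment estimate itself, which can be inherited essentially verbatim from the proof of Theorem~\ref{thm:1sparse_lower}(ii).
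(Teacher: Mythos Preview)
This statement is a conjecture in the paper and is left unproved; the authors give only the heuristic that an $s$-sparse signal in dimension $n$ ought to behave like an $s$-fold product of independent 1-sparse problems in dimension $n/s$, and explicitly say they could not turn this into a proof. So there is no paper argument to compare against. Your proposal is precisely an attempt to formalize that heuristic, and you correctly isolate the two obstructions the authors allude to: the auxiliary block-restricted alternative is not the law specified by the model (jumps in the true dynamics leave their block with probability close to one), and under adaptive sensing the claimed factorization $\E_0[\tilde L^2]=\prod_i\E_0[\tilde L_i^2]$ is false as stated, since each $\tilde L_i$ depends on the entire action sequence $\bA$, which is itself a functional of all of $\bY$.

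There is a further gap you do not mention, arising already in the auxiliary problem before any reduction is attempted. The truncated second-moment method also requires $\E_0[\tilde L(\bY)]$ to be bounded away from zero, and in the 1-sparse proof this equals $\P(\Omega_c)\geq 1/4$. For the $s$-block model the natural truncation is $\bigcap_{i=1}^s \Omega_c^{(i)}$; the per-block change-point processes are independent, so this has probability $\P(\Omega_c)^s$, which the Appendix lemma only guarantees to exceed $(1/4)^s$. That tends to zero with $s$ and renders \eqref{eqn:truncated_method} vacuous. One can push $\P(\overline{\Omega_c^{(i)}})$ down to order $1/s$ by inflating $c$ to roughly $6+3\log(2s)$ (the lemma's proof yields this), but then the bound carries an extra $\log s$ factor and the specific constant $c=6+3\log 2$ in the conjecture is not recovered. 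A smaller point: the per-block second moment is not literally the 1-sparse bound with $m$ replaced by $M_i$, and there is no optimization over $\{M_i\}$ to perform --- intervals of block $i$ that receive no in-block measurement contribute a factor of $1$, and since $\sum_{i,j}\1\{M_{i,j}>0\}\leq\sum_{i,j}M_{i,j}=m$, the product over all blocks and intervals has at most $m$ nontrivial factors automatically, regardless of how the adaptive strategy allocates effort.
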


\begin{proof}[Proof of Theorem~\ref{thm:1sparse_lower}]
We prove the two parts of the statement separately.

\vspace{0.2cm}
\emph{(i):} The proof is very similar to that of Theorem~3.1 in \cite{AS_Rui_2012}, with small modifications to be able to deal with dynamically evolving signals (which actually simplify the argument).  By Theorem~2.2 of \cite{Tsybakov_2009} we have
\begin{equation}\label{eqn:tsybakov}
\inf_{\Psi} \max_{i=0,1} \P_i (\Psi \neq i) \geq \frac{1}{4} e^{- \KL (\P_0 \| \P_1 )} \ ,
\end{equation}
where $\KL (\P_0 \| \P_1 )$ denotes the Kullback-Leibler divergence between the distribution of the data $\bY$ under the null and alternative respectively.  This divergence can be simply upper bounded using Jensen's inequality as
\begin{align*}
\KL (\P_0 \| \P_1 ) & = \E_0 \left[ -\log L(\bY ) \right] \\
& \leq \E_0 \left[ \E \left[ \left.  -\sum_{t\in [m]} \1 \{ A_t \in S^{(t)} \} \log \frac{f_\mu (Y_t)}{f_0 (Y_t)} \right| \bY \right] \right] \ .
\end{align*}

Changing the order of integration and expanding the densities $f_\mu (\cdot )$ and $f_0 (\cdot )$ we get
\[
\KL (\P_0 \| \P_1 ) \leq \frac{\mu^2}{2} \E \left[ \sum_{t\in [m]} \1 \{ A_t \in S^{(t)} \} \right] = \frac{\mu^2}{2}\frac{sm}{n} \ ,
\]
where the last step follows from the symmetry of the supports.  In particular note that $\E [ \1 \{ A_t \in S^{(t)}\} | A_t ] = s/n$ for every $t\in [m]$.  Plugging this bound into the right side of \eqref{eqn:tsybakov}, using that the left side of \eqref{eqn:tsybakov} is at most $\varepsilon$ due to our assumption, and rearranging concludes the proof of the first claim.

\vspace{0.2cm}
\emph{(ii):} We use the truncated second moment method, as in the proof of Theorem~\ref{thm:na_lower}.  Recall that, from \eqref{eqn:TVbound} and \eqref{eqn:truncated_method} , we have
\[
2 \max_{i=0,1} \P_i (\Psi \neq i) \geq 1- \frac{1}{2} \left( \sqrt{\E_0 \left( \tilde{L}(Y)^2 \right) -2\E_0 \left( \tilde{L}(Y)\right) +1} +1 -\E_0 \left( \tilde{L}(Y) \right) \right) \ ,
\]
for any function $\tilde{L}(\cdot )$ satisfying $\tilde{L}(\by ) \leq L(\by ),\ \forall \by \in \mathcal{Y}$, where $L(\cdot )$ is the likelihood function.

\begin{figure}
\centering
\includegraphics[scale=0.38]{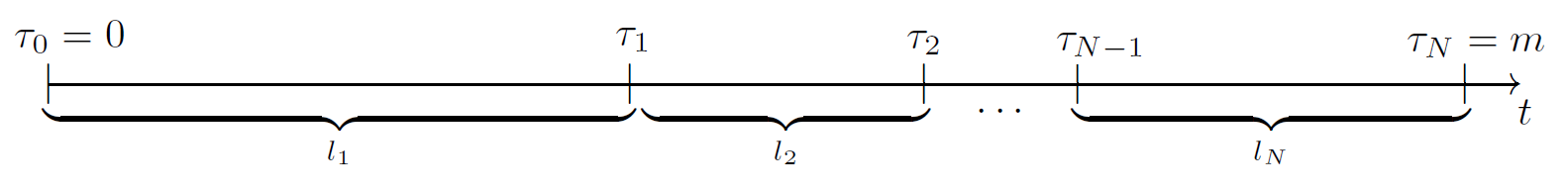}

\caption{Illustration of the notation introduced for the proof part (ii) of Theorem~\ref{thm:1sparse_lower}.}
\label{fig:notation}
\end{figure}

To aid the presentation we begin by introducing some convenient notation, illustrated in Figure~\ref{fig:notation}.  Recall that the variables $\theta^{(t)}_{i} \displaystyle{\mathop{\sim}^{\text{i.i.d.}}}\ \Ber (p)$, $t\in [m],\ i\in [s]$ identify the change points of the signal.  Since now we are dealing with the 1-sparse case we have one variable per time index, so in what follows we drop the subscript from the previous notation.  Furthermore, note that our time horizon is $m$, so we enforce $\theta^{(m)} =1$ as this does not change the model and it is convenient for the presentation.

Let the total number of change points over the time horizon be $N = \sum_{t\in [m]} \1 \{ \theta^{(t)} =1\}$.  Note that $N-1 \sim \Bin (m-1,p)$.  Let $\tau_0 = 0$ and for $j\in N$ let $\tau_j = \min \{ t>\tau_{j-1}:\ \theta^{(t)} =1 \}$ denote the time instances when the signal changes (so $\tau_N =m$), as illustrated in Figure~\ref{fig:notation}.  Note that on the time intervals $[ \tau_j +1,\tau_{j+1} ]$ the signal is static.  Let $l_j=\tau_{j+1}-\tau_{j}$ denote the length of these intervals, and $S_j,\ j\in [N]$ denote the correspoding signal support.  Finally, for any $t\in [m]$ let the number of change points up to time $t$ be ${N(t) = \max \{ j: \tau_j \leq t \}}$.  It is important to note that the random variables $\theta^{(t)}$ completely determine the variables $\tau_j$, $N(t)$ and $N$.

Let us first explicitly write the likelihood of the observations in the model under consideration.  We use the shorthand notation $\by = \{ y_t \}_{t\in [m]}, \bA =\{ A_t \}_{t\in [m]}, \bS =\{ S^{(t)} \}_{t\in[m]}, \btheta =\{ \theta_t \}_{t\in [m]}$.  As before, the density of $\by$ under the alternative is a mixture.  In particular, denoting the density of $N(\mu ,1)$ by $f_\mu$, the conditional density of $\by$ can be written as
\begin{align*}
\d \P_1 (\by |\bA ,\bS ) & = \prod_{t\in [m]} \left( \1 \{ A_t \in S^{(t)} \} f_\mu (y_t) + \1 \{ A_t \notin S^{(t)} \} f_0 (y_t) \right) \\
& = \prod_{j\in [N]} \prod_{t= \tau_{j-1}+1}^{\tau_j} \left( \1 \{ A_t \in S_j \} f_\mu (y_t) + \1 \{ A_t \notin S_j \} f_0 (y_t) \right) \ .
\end{align*}
Hence, the likelihood ratio is
\[
L(\by ) = \E \left[ \E \left[ \left.  \exp \left( \sum_{j\in [N]} \sum_{t=\tau_{j-1}+1}^{\tau_j} \1 \{ A_t\in S_j \} \log \frac{f_\mu (y_t)}{f_0 (y_t)} \right) \right| \btheta ,\bA \right] \right] \ ,
\]
where conditioning on $\btheta$ and $\bA$ is done in order to conveniently define $\tilde L(\by)$.  Consider the event
\[
\Omega_c = \left\{ \forall j:\ l_j \leq 2c/p \right\} \ ,
\]
with some fixed $c>0$.  This event says that the signal is never static for a time longer than $2c/p$.  Note that this event is determined exclusively by the variables $\{ \theta_t \}_{t\in [m]}$.  We define the truncated likelihood as
\[
\tilde{L}(\by ) = \E \left[ \1\{\Omega_c\} \E \left[ \left.  \exp \left( \sum_{j\in [N]} \sum_{t=\tau_{j-1}+1}^{\tau_j} \1 \{ A_t\in S_j \} \log \frac{f_\mu (y_t)}{f_0 (y_t)} \right) \right| \btheta ,\bA \right] \right] \ .
\]

As in the proof of Theorem~\ref{thm:na_lower}, we need to upper bound $\E_0 \left( \tilde{L}(\bY)^2 \right)$ and lower bound $\E_0 \left( \tilde{L}(\bY)\right)$.  We start with the latter.  Since the event $\Omega_c$ only involves the variables $\btheta$, we have
\[
\E_0 \left( \tilde{L}(\bY)\right) = \P (\Omega_c ) \ .
\]
We have the following result, the proof of which is presented in the Appendix.

\begin{lemma}\label{lem:event}
Consider the event
\[
\Omega_c = \left\{ \forall j:\ l_j \leq 2c/p \right\} \ .
\]
In the model described above $\P (\Omega_c ) >1/4$ whenever $c\geq 6 + 3 \log 2$ and $p \geq 8/m$.
\end{lemma}

According to Lemma~\ref{lem:event}, we have an appropriate bound for $\E_0 \left( \tilde{L}(Y)\right)$ when $c\geq 6 + 3 \log 2$.  All that remains is to derive an upper bound on the truncated second moment.  This can be done much the same way as in the proof of Theorem~\ref{thm:na_lower}.  Using Jensen's inequality, we have
\[
\E_0 \left[ \tilde{L}(\bY )^2 \right] \leq \E_0 \left[ \E \left[ \1\{\Omega_c\} \E \left[ \left.  \exp \left( \sum_{j\in [N]} \sum_{t=\tau_{j-1}+1}^{\tau_j} \1 \{ A_t\in S_j \} \log \frac{f_\mu (y_t)}{f_0 (y_t)} \right) \right| \btheta ,\bA \right]^2 \right] \right] \ .
\]
Note that given $\btheta$, the $S_j \sim \Unif ([n])$ and independent for $j\in [N]$.  Let $\{ S'_j \}_{j\in [N]}$ be an independent copy of $\{ S_j \}_{j\in [N]}$.  Following the same reasoning as in Theorem~\ref{thm:na_lower} we can write the square of the conditional expectation above as the product of two expectations using the random variables $\{S_j, S'_j \}_{j\in [N]}$, and change the order of the expectations to get
\[
\E_0 \left[ \tilde{L}(\bY )^2 \right] \leq \E \left[ \1\{\Omega_c\} \E \left[ \left.  \exp \left( \mu^2 \sum_{j\in [N]} \sum_{t=\tau_{j-1}+1}^{\tau_j} \1 \{A_t \in S_j \cap S'_j \} \right) \right| \btheta ,\bA \right] \right] \ .
\]

So far we have not taken into account the fact that we are allowed an adaptive design.  This is captured by the crude bound below.
\[
\sum_{t=\tau_{j-1}+1}^{\tau_j} \1 \{A_t \in S_j \cap S'_j \} \leq l_j \1 \{ \exists t\in [\tau_{j-1}+1,\tau_j ]:\ A_t \in S_j \cap S'_j \} \ .
\]
Informally this means that, if the used design ``hits'' the signal at any place in the interval $[\tau_{j-1}+1,\tau_j ]$ it is assumed the design hit the signal in the entire interval (capturing more information).  Furthermore
\[
\P \left(\left.  \exists t\in [\tau_{j-1}+1,\tau_j ]:\ A_t \in S_j \cap S'_j\right|\bA,\btheta \right) = \P \left(\left.  S_j \in \{ A_t :\ t\in [\tau_{j-1}+1,\tau_j ] \} \right|\bA,\btheta \right)^2 \ .
\]
However, $|\{ A_t:\ \tau_{j-1} +1 \leq t \leq \tau_j \}| \leq \tau_j-\tau_{j-1} := l_j$ thus the probability above is bounded from above by $l_j^2 /n^2$.

Using all this yields
\begin{align*}
\E_0 \left[ \tilde{L}(\bY )^2 \right] & \leq \E \left[ \1\{\Omega_c\} \prod_{j\in [N]} \E \left[ \left.  \exp \left( l_j \mu^2 \1 \{\exists t\in [\tau_{j-1}+1,\tau_j ]:\ A_t \in S_j \cap S'_j \} \mu^2 \right) \right| \btheta ,\bA \right] \right] \\
& \leq \E \left[ \1\{\Omega_c\} \prod_{j\in [N]} \left( 1+ \frac{l_j^2}{n^2} \left( e^{l_j \mu^2} -1 \right) \right) \right] \ .
\end{align*}

The last expression is readily upper bounded by the fact that $N\leq m$.  Although this is a {crude bound}\footnote{In principle one can recall that $N-1\sim\Bin(m-1,p)$ and proceed from there, although it will overcomplicate the derivation.  In any case, this will at most allow us to replace the term $p^2$ by $p$ inside the logarithm in the statement of the theorem, which is not very relevant.} it is enough for our purposes.  Also, on the event $\Omega_c$ we have the upper bound $l_j \leq 2c/p$ for every $j\in [N]$.  We conclude that
\[
\E_0 \left( \tilde{L}(\bY )^2 \right) \leq \left( 1+ \frac{4c^2}{p^2 n^2} \left( e^{2c\mu^2 /p} -1 \right) \right)^m \ .
\]



Combining our results yields that if there exists a test for which $\max_{i=0,1} \P (\Psi \neq i) \leq \varepsilon$, we must have
\[
\sqrt{\left( 1+ \frac{4c^2}{p^2 n^2} \left( e^{2c\mu^2 /p}-1 \right) \right)^m -\frac{1}{2}} +\frac{3}{4} \geq 2-4\varepsilon \ .
\]
Rearranging gives
\[
\frac{4c^2}{p^2 n^2} \left( e^{2c\mu^2 /p}-1 \right) \geq \sqrt[m]{\left( \frac{5}{4} - 4\varepsilon \right)^2 + \frac{1}{2}}-1 \ .
\]
Using the inequality $\log x \leq x-1$ on the right hand side, and rearranging concludes the proof.
\end{proof}


\section{Numerical evaluation of the non-adaptive lower bound}\label{sec:sim}

Although the lower bound in Theorem~\ref{thm:na_lower} only deals with the extreme cases $p\in \{ 0,1 \}$, we conjecture that in the regime $m\approx n/s$ the same scaling of $\mu$ is necessary for reliable detection, regardless of the value of $p$.  To corroborate this conjecture we provide a brief section of numerical experiments.  We numerically estimate the right hand side of \eqref{eqn:TVbound}, which is a lower bound on the maximal probability of error.  We do so for several values of $p\in [0,1]$, and for each $p$ we plot the value of the lower bound as a function of $\mu$.

Note that the sampling strategy has a large impact on the value in question.  We know that when $p=0$ a sub-sampling scheme is near-optimal (see Remark~\ref{rem:NA_lower}), and so it should also be reasonable for small values of $p$.  On the other hand, the sampling strategy is irrelevant for $p=1$, and probably essentially irrelevant for large $p$.  This motivates using a sub-sampling scheme in all the experiments.

Furthermore, note that unless we sample $c\cdot n/s$ different components, the probability $\P_1 (\forall t\in [m]:\ A_t \notin S^{(t)})$ can not be small.  To ensure an upper bound of $\varepsilon$ on the previous probability, we need to choose $c \equiv c(\varepsilon ) = \log (1/\varepsilon )$.

Considering all the above, we set up our experiment as follows.  We set $n=5000, s=\lceil n^{1/4} \rceil =9$ and $m=c(\varepsilon )n/s$ with $\varepsilon =0.05$.  In this case, sub-sampling reduces to measuring $m$ randomly selected components (one measurement each).  We note that we experimented using multiple values of $s$ across a wide range of sparsity levels, but found qualitatively the same result in all cases.

Based on previous work concerning the sparse-mixture model (e.g.  \cite{HC_Donoho_2004}) we expect the lower bound to reach the value $\varepsilon$ when $\mu \approx \sqrt{2\log (n/s)}$.  Hence, we set $\mu_t \approx t\cdot \sqrt{2\log (n/s)}$, and plot the r.h.s.  of \eqref{eqn:TVbound} as a function of $t$.

The left panel of Figure~\ref{fig:NA_lower} seems to support our conjecture that the problem difficulty is independent of $p$ in the regime $m\approx n/s$, as all the curves are on top of each other.  Furthermore, since there is always a non-negligible chance of not sampling a signal component, the lower bound is bounded away from zero, even as $\mu_t$ grows large.

\begin{figure}[t]
\centering
\subfigure[]{\includegraphics[scale=0.52]{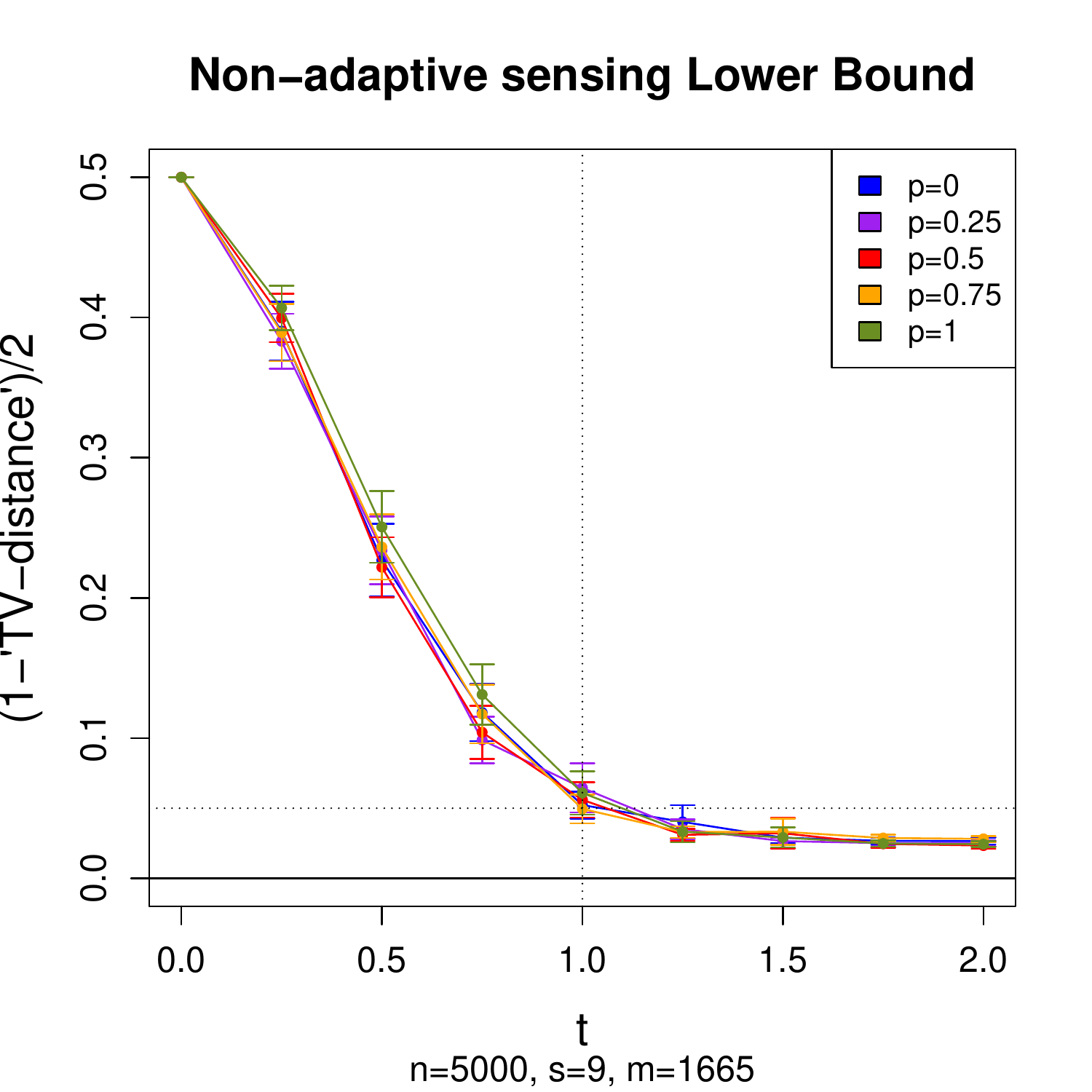}}
\subfigure[]{\includegraphics[scale=0.52]{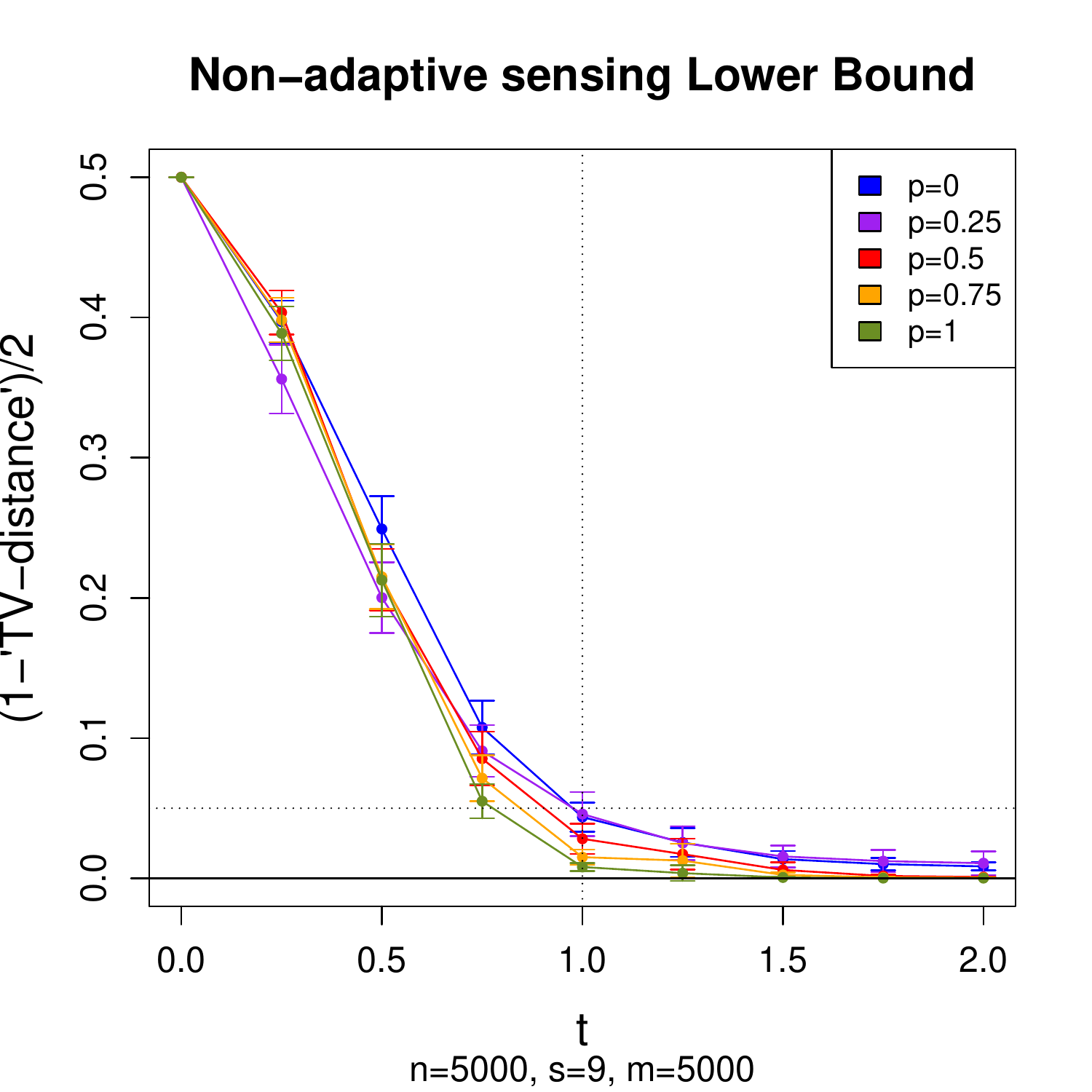}}
\caption{R.h.s.  of \eqref{eqn:TVbound} as a function of the signal strength $\mu = t\cdot \sqrt{2 (2c(\varepsilon )n /sm) \log (n/s)}$, left panel: $m=c(\varepsilon ) \cdot n/s$; right panel: $m=n$.  Curves represent different values of $p$: 0 - blue; 0.25 - purple; 0.5 - red; 0.75 - yellow; 1 - green.  Plotted values are the averages based on 100 simulations, and error bars have total length $4$ times the standard error (approximate two-sided $95 \%$ confidence intervals).  Horizontal dashed line at $0.05$.}
\label{fig:NA_lower}
\end{figure}

To contrast this, we present another simulation with the same setup, except that the number of measurements $m \gg n/s$.  In particular, we set $m=n$, but otherwise use the same parameters.  Note that in this case, sub-sampling amounts to sampling $c(\varepsilon ) n/s$ randomly chosen components, but now we sample each of these $m/(c(\varepsilon ) n/s)$ consecutive times.

To keep the two plots on the same horizontal scale, we set $\mu_t = t\cdot \sqrt{ (2c(\varepsilon )n /sm) \log (n/s)}$ in the right panel of Figure~\ref{fig:NA_lower}.  It seems that in this case, the curves are no longer on top of each other, suggesting that the value of $p$ has an impact on the problem difficulty.  Surprisingly, the curve corresponding to $p=1$ is the one that descends the fastest, though the difference is only marginal.  Though the cause of this is unclear, a possible reason might be that for faster signals the chance of not sampling active components at all is diminished, an effect that is more pronounced when $m$ is large.

In any case, this shows that in the regime $m\gg n/s$ the speed of change might have a non-trivial effect on the problem difficulty.  Exploring this is out of the scope of this work, but might be an interesting topic of future research.


\section{Final remarks} \label{sec:end}

In this paper we studied the problem of the detection of signals that evolve dynamically over time.  We introduced a simple model for the evolution of the signal that allowed us to explicitly characterize the difficulty of the problem with a special regard to the effect of the speed of change.  We also showed the potential advantages that adaptively collecting the observations bring to the table and showed that these are more and more pronounced as the speed of change decreases, which is in line with previous results dealing with signal detection using adaptive sensing.  The lower bounds derived in this paper provide a clear picture of the role of the rate of change parameter $p$, but unfortunately still do not span the entire range of problems we would like to consider (e.g.  Theorem~\ref{thm:na_lower} applies only to $p=0,1$ and part~(ii) of Theorem~\ref{thm:1sparse_lower} applies only to $s=1$).  The latter difficulties appear to be mostly technical and the authors suspect these might be possible to address with carefully chosen reductions.  Our contributions merely scratch the surface of this interesting problem, and below we highlight a few interesting directions for future work in this regard.\\

\textbf{Large vs.  small sample regimes:} in this work we focus primarily on the case $m\approx n/s$, which may be deemed as the small sample regime.  When the number of measurements $m$ is significantly larger the type of tests and performance tradeoffs will likely be different, even under the non-adaptive sensing paradigm.  For instance, we expect the signal dynamics to have an effect on performance, meaning that it is easier to detect signals non-adaptively when $p$ is smaller.  Other interesting questions arise in that setting as well --- what is the optimal non-adaptive sensing design?  These questions become even more intriguing when one considers adaptive sensing.\\

\textbf{Restricted dynamics:} in the model considered in this paper when signal components change they can move to any unoccupied location in the signal vector.  This assumption simplifies the setup, but in some applications might be too unrestrictive.  For instance, if signal components can only move to adjacent locations at each time step the effect of the speed of change will likely be less pronounced in the difficulty of detection (at least for adaptive sensing).  Understanding the effect of such restrictions could prove valuable in certain applications, such as detection of a disease outbreak in a network, besides being interesting from a theoretical point of view.\\

\textbf{Structures:} in certain situations the signal support can be assumed to have structure to it, for instance all anomalous items might be consecutive or have some other pattern.  In some cases the structure of the support has a huge effect on the difficulty of the problems of detection and recovery (see for instance \cite{AS_structured_Rui_ET_2013,ACS_structured_Rui_ET_2014}).  How structural restrictions affect these tasks for dynamically evolving signals could be a fruitful avenue of research.\\

\textbf{Support recovery:} another common question in such settings is how well can we estimate the support of a signal.  That is, instead of deciding only if there are anomalous items or not, we need to determine which of the items are anomalous.  This is also an interesting problem to study for dynamically evolving signals, although a precise formulation of the objective and performance metric for such estimators is less immediate than for static signals.

\section*{Acknowledgements}

This work was partially supported by a grant from the {\em Nederlandse organisatie voor Wetenschappelijk Onderzoek} (NWO 613.001.114). We are very grateful for the comments of the two anonymous referees, which helped improving the presentation.

\section*{Appendix}

\begin{proof}[Proof of Lemma~\ref{lem:event}]
We write
\begin{align*}
\P (\Omega_c ) & \geq \E \left( \{ N-1 >mp/2\} \cap \{ \forall j:\ l_j \leq cm/N \} \right) \\
& = \E \left[ \1 \{ N-1 > mp/2 \} \E \left[ \1 \{ \forall j:\ l_j\leq cm/N \} \ \big| N \right] \right] \ ,
\end{align*}

We first lower bound the inner conditional probability.  Note that if $N\leq c$ this probability is one (since $cm/N\geq m$ and $l_j \leq m$ by definition).  When $N>c$, we will upper bound the probability of the complementary event.

Note that given $N$ the distribution of $\btheta$ is uniform from the set of $0-1$ sequences of length $m$ containing exactly $N$ ones, and for which also $\theta_m =1$.  Hence, to upper bound $\P (\exists j:\ l_j > cm/N)$, we simply need to count the number of sequences described above for which we have a long block.

We can get an upper bound on this count in the following way.  First note that since the last element of the sequence is always one, we can simply think of sequences of length $[m-1]$ containing $N-1$ ones.  Consider an interval of length $cm/N$ in the set $[m-1]$.  Now consider the sequences containing $N-1$ ones, and for which there are no ones in the aforementioned interval.  Note that for all such sequences the existence of at least one long interval is guaranteed.  We can simply count how many $0-1$ sequences can be generated like this.  This number is an upper bound on the number of $0-1$ sequences that have $N$ ones, the last element of the sequence is one and for which $\exists j:\ l_j > cm/N$.

We thus have
\begin{align*}
\P (\exists j:\ l_j> cm/N |N) & \leq (m-cm/N) \frac{{m-cm/N \choose N-1}}{{m-1 \choose N-1}} \\
& = (m-cm/N) \frac{(m-cm/N)(m-cm/N -1)\dots (m-cm/N -N+2)}{(m-1)(m-2)\dots (m-N+1)} \\
& \leq \frac{m}{m-1}(1-c/N) \left( \frac{m-cm/N}{m-2} \right)^{N-2} \\
& < \left( \frac{m-cm/N}{m-2} \right)^{N-2} \ .
\end{align*}
Now consider the logarithm of the expression above.  Using $\log (1+x) \leq x$, we get
\begin{align*}
\log \P (\exists j:\ l_j> cm/N |N) & < (N-2) \left( \log \frac{m}{m-2} + \log (1-c/N) \right) \\
& \leq (N-2) \left( \frac{2}{m-2} - \frac{c}{N} \right) \\
& \leq -\log{2} \ ,
\end{align*}
whenever $c\geq 6+3\log 2$, using the fact that $3\leq c\leq N\leq m$.

Hence $\P (\Omega_c ) \geq \P (N-1 >mp/2)/2$.  All that remains is to use the fact that $N-1 \sim \Bin (m-1,p)$.  For instance Chebyshev's inequality yields
\[
\P (N-1 \leq mp/2) \leq \frac{4(m-1)p(1-p)}{(mp)^2} \leq 1/2 \ ,
\]
when $p\geq 8/m$ and so the claim is proved.
\end{proof}

\bibliographystyle{chicago}
\bibliography{Dynamics_references}

\end{document}